\newtheorem{theorem}{Theorem}[section]
\newtheorem{lemma}[theorem]{Lemma}
\theoremstyle{definition}
\newtheorem{example}[theorem]{Example}
\theoremstyle{remark}
\newtheorem{remark}[theorem]{Remark}
\DeclareMathOperator*{\argmin}{arg\,min}
\numberwithin{equation}{section}
\newcommand{\abs}[1]{\lvert#1\rvert}
\renewcommand{\L}{\mathcal{L}^2[a,b]}
\renewcommand{\H}{\mathcal{H}^1[a,b]}
\newcommand{\HH}{\mathcal{H}^2[a,b]}
\newcommand{\HHO}{\mathcal{H}^2_0[a,b]}
\newcommand{\Lnorm}[1]{\abs{\abs{#1}}_{\mathcal{L}^2}}
\newcommand{\Hnorm}[1]{\abs{\abs{#1}}_{\mathcal{H}^1}}
\newcommand{\HHnorm}[1]{\abs{\abs{#1}}_{\mathcal{H}^2}}
\newcommand{\Lprod}[2]{(#1,#2)_{\mathcal{L}^2}}
\newcommand{\Hprod}[2]{(#1,#2)_{\mathcal{H}^1}}
\newcommand{\iab}{\int_a^b}
\newcommand{\lgrad}[1]{\nabla_{\mathcal{L}^2}^{#1}}
\newcommand{\hgrad}[1]{\nabla_{\mathcal{H}^1}^{#1}}
\begin{document}

\title{Smoothing $\mathcal{L}^2$-gradients in iterative regularization}

\author{Abinash Nayak}
\address{Visiting Assistant Professor, Department of Mathematics, University of Alabama at Birmingham, University Hall, Room 4005, 1402 10th Avenue South, Birmingham AL 35294-1241, (p) 205.934.2154, (f) 205.934.9025}
\email{nash101@uab.edu; avinashnike01@gmail.com}

\subjclass{Primary 65R30, 65R32; Secondary 65R20, 65K10}
\date{\today}

\keywords{Inverse problems, Iterative regularization, Variational minimization, Ill-posed problems, Integral equations, Tikhonov regularization.}

\begin{abstract}
Connected with the rise of interest in inverse problems is the development and analysis of regularization methods, which are a necessity due to the ill-posedness of inverse problems. Tikhonov-type regularization methods are very popular in this regard. However, its direct implementation for large-scale linear or non-linear problems is a non-trivial task. In such scenarios, iterative regularization methods usually serve as a better alternative. In this paper we propose a new iterative regularization method which uses descent directions, different from the usual gradient direction, that enable a more smoother and effective recovery than the later. This is achieved by transforming the original noisy gradient, via a smoothing operator, to a smoother gradient, which is more robust to the noise present in the data. It is also shown that this technique is very beneficial when dealing with data having large noise level. To illustrate the computational efficiency of this method we apply it to numerically solve some classical integral inverse problems, including image deblurring and tomography problems, and compare the results with certain standard regularization methods, such as Tikhonov, TV, CGLS, etc.
\end{abstract}
\maketitle

\section{\textbf{Introduction}}
\subsection{Inverse Problems and Regularization:}
An inverse problem in general is a problem where the output (effext) is known but the input (source) is not, in contrast to a direct problem where we deduce the effect from the source. Mathematically, an inverse problem is often expressed as the problem of finding/estimating a $\varphi$, given $g$, which satisfies the following operator equation:
\begin{equation}\label{T Gen.}
T\varphi = g,
\end{equation}
where $T$ is also a known operator describing the underlying physical process, the domain and range of $T$ varies depending on the problem. Typically, the solution of \eqref{T Gen.} is approximated by the solution of a least-square problem, i.e.
\begin{equation}\label{LS solution}
    \mbox{minimize} \; F(\psi) := ||T\psi - g||_2^2.
\end{equation}
However, inverse problems are usually ill-posed, in the sense of violating Hadamard's third condition: ``Continuous dependence of the data", i.e., even for a slightly perturbed data $g_\delta$, such that $||{g - g_\delta}|| \leq \delta$ (usually small), the inverse recovery becomes unstable, $||{\varphi - \varphi_\delta}|| >> \delta$ (very large), due to the unboundedness of the (pseudo-) inverse operator $T^\dagger$ and the noise present in the data. To counter such instabilities or the ill-posedness of inverse problems, regularization methods have to be employed. In the last few decades, several regularization methods have been established for linear as well as nonlinear inverse problems. Broadly, there exist two kinds of regularization approaches: 

\subsection{Tikhonov-type regularization:}
Tikhonov-type regularization methods are probably the most well known regularization techniques for solving linear as well as nonlinear inverse problems (see \cite{Engl+Hanke+Neubauer,Bakushinsky+Goncharsky, Groetsch, Baumeister,Morozov_a}), where (instead of minimizing the simple least-square problem \eqref{LS solution}) one recovers a \textit{regularized solution} by minimizing a (constrained or) penalized functional
\begin{equation}\label{Tikhonov}
F(\psi;\lambda,L,\psi_0,p,q) =  ||{T\psi - g_\delta}||_p^p + \lambda ||L(\psi - \psi_0)||_q^q,
\end{equation}
(for some $p$ and $q$) where $\lambda > 0$ is a called the regularization parameter, $||g - g_\delta|| \leq \delta$ is the error norm, $\psi_0$ is an initial guess and $L$ is a regularization operator
, with the null spaces of $T$ and $L$ intersecting trivially. 
The choice of an appropriate parameter value ($\lambda_0$) is crucial here, as it balances between the data fitting term $||{T\psi - g_\delta}||_p^p$ and the regularization term $||L(\psi - \psi_0)||_q^q$. If $\lambda$ is too small then minimizing \eqref{Tikhonov}
\textit{over-fits} the noisy data (thus, leading to a \textit{noisy recovery} or, statistically speaking, a \textit{high-variance recovery}), where as, if $\lambda$ is too big then it \textit{under-fits} the data (thus, leading to an \textit{over-smooth recovery}, or statistically, a \textit{high-bias recovery}). Choosing an appropriate $\lambda_0$ is not a trivial task, especially when dealing with large-scale problems, and typically one has to compute the solutions for many different $\lambda$ values before choosing an appropriate $\lambda_0$.

\subsection{(Semi-) Iterative regularization:}\label{semi-iterative methods}
Contrary to the above approach, in a (semi-) iterative regularization method one recovers a regularized solution $\varphi^\delta$ of \eqref{T Gen.} by simply stopping the minimization process of the least-square problem \eqref{LS solution} at an appropriate (early) instance. That is, starting from an initial guess $\psi_0^\delta$, one updates the $m^{th}$ recovered solution ($\psi_m^\delta$) in a direction ($\xi_m^\delta$) such that the $(m+1)^{th}$ recovered solution ($\psi_{m+1}^\delta$) is better than the previous, meaning $||\psi_{m+1}^\delta-\varphi|| < ||\psi_m^\delta - \varphi||$, and stops the iteration after certain number of steps (i.e., $m  \leq m_0$, for some appropriate $m_0$, usually depending on the noise level $\delta$). Mathematically,
\begin{equation}\label{steepest descent}
    \psi_{m+1}^\delta = \psi_m^\delta + \tau_m \xi_m^\delta, \hspace{1cm} 0\leq m \leq m_0(\delta) < \infty,
\end{equation}{}
where $\tau_m$ is the m-th step-length and the descent direction $\xi_m^\delta$ is usually the negative gradient of the least-square functional $F$, i.e., $\xi_m^\delta = -T^*(T\psi_m^\delta - g_\delta)$. When the step-size is fixed ($0<\tau_m = \tau < \frac{2}{||T^*T||}$) for all $m$, it's known as Landweber iterations (also known as Richardson iterations) and has been intensively investigated in the literature (see, \cite{Landweber, Hanke_Neubauer_Scherzer, Engl+Hanke+Neubauer, Bakushinsky+Goncharsky, Hanke1991, Schock1986}). The main drawback of Landweber iterations is its slow performance, i.e., it takes a large number of iterations to obtain the optimal convergence rates. To circumvent this drawback many extensions (known as polynomial or accelerated Landweber methods) have been proposed and studied in the framework of regularization (for an overview, see \cite{Hanke1991, Schock1986, Bakushinskii1979, Bakushinsky+Kokurin2004}). Such iterative methods are typically known as \textit{semi-iterative methods}, since when dealing with a noisy data ($g_\delta$) one encounters a semi-convergent nature of the recovery process. 
The advantage of these extended semi-iterative methods over the simple Landweber iteration is that, while Landweber iteration \eqref{steepest descent} uses only the last iterate $\psi_m^\delta$ to construct the new approximate $\psi_{m+1}^\delta$, in a semi-iterative method one make use of the last few iterates (if not all), 
\begin{gather}\label{Krylov-space method}
    \psi_{m+1}^\delta = \sum_{i=0}^m \mu_{m,i}\psi_i^\delta + \tau_m \xi_m^\delta,  \\
    \sum_{i=0}^m \mu_{m,i} = 1, \quad \tau_m \neq 0, \notag
\end{gather}{}
where $\xi_m^\delta$ is the descent direction (usually $\xi_m^\delta = -T^*(T\psi_m^\delta - g_\delta)$). Note that, $\psi_{m+1}^\delta - \psi_0^\delta$ belongs to the \textit{Krylov-subspace} $\mathcal{K}_{m+1}(T^*T,\xi_0^\delta)$, which is defined as
\begin{equation}\label{Krylov-subspace}
    \mathcal{K}_{m+1}(T^*T,\xi_0^\delta) := \mbox{span}\{ \xi_0^\delta, (T^*T)\xi_0^\delta, \cdots , (T^*T)^m \xi_0^\delta \}.
\end{equation}{}
Hence, these methods are also called as \textit{Krylov-subspace} methods. 
Even further acceleration is possible by adapting a conjugate-gradient type method (see \cite{Hanke_2017}), where $\mu_{m,i}$ in \eqref{Krylov-space method} depend on data $g_\delta$ (making it a non-linear method).

\subsection{A new iterative regularization method:} 
First note that, starting from the simple Landweber iterations \eqref{steepest descent}, all the generalizations and extensions \eqref{Krylov-space method} developed in the iterative regularization literature is focused only on improving the \textit{speed of the convergence} of the descent process. In this paper, we discuss a new iterative regularization method that not only improves the descent rate but also improves the smoothness of the recovery, especially when dealing with data having large noise level, and hence, leads to a much effective and efficient recovery. Note that, in all of the aforementioned iterative methods there is a direct influence of the noisy data $g_\delta$ in the descent direction, i.e., $\xi_m^\delta = T^*(g_\delta - T\psi_m^\delta)$. Therefore, when the noise level $\delta$ is large then the recovered solution $\varphi^\delta$ (though regularized) sometimes still possesses some of the noisy characteristics arising from $g_\delta$, especially when $\delta$ is large and $T^*$ does not compensate its noisy influence, see Example \ref{Example Numerical Derivative}. Here, we present a technique to pre-condition the gradient, using a smoothing operator, so that it significantly reduces the noisy influence of $g_\delta$, and thus, enhance the smoothness and accuracy of the recovery. First we observe that, a typical real-life data is usually contaminated by additive noise of \textit{zero mean}, i.e., $g_\delta = g + \epsilon_\delta$, where $\epsilon_\delta$ is a random variable such that $\mathbb{E}(\epsilon_\delta) = 0$ and $ ||\epsilon_\delta||_2^2 = ||g_\delta - g||_2^2 \leq \delta^2$. Also, notice that integrating the noisy data smooths out the noise present in the data, for example, see Figure \ref{Noisy g} ($g_\delta$ vs. $g$, with a relative error $\frac{||g - g_\delta||_2}{||g||_2}\% \approx 10\%$) vs. Figure \ref{Noisy u} ($\int g_\delta$ vs. $\int g$, with $\frac{||\int (g - g_\delta)||_2}{||\int g||_2}\% \approx 0.73\%$). This provides a heuristic motivation to incorporate the integrated data (in a regularized manner) during the minimization process to improve the smoothness of the recovery, the theoretical justifications and implementations are demonstrated in the later sections.  

\begin{figure}
    \begin{subfigure}{0.4\textwidth}
        \includegraphics[width=\textwidth]{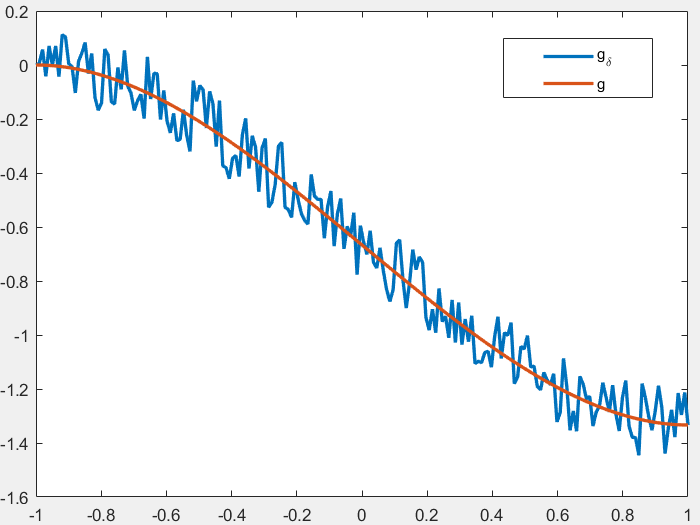}
        \caption{$g$ vs. $g_\delta$}
        \label{Noisy g}
    \end{subfigure}
    \begin{subfigure}{0.4\textwidth}
        \includegraphics[width=\textwidth]{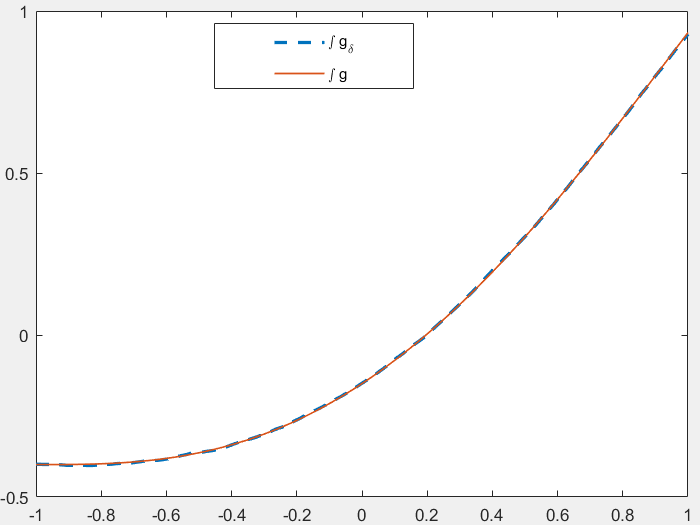}
        \caption{$\int g$ vs. $\int g_\delta$}
        \label{Noisy u}
    \end{subfigure}
    \caption{Integration smooths out the noise in the data.}  
    \label{integrated data}
\end{figure}

We briefly outline our new method here. Let's assume the operator $T: \mathcal{D}_T \rightarrow \mathcal{H}_2$ as a linear bounded injective\footnote{for simplicity, we work in an unique solution scenario. For a non-injective $T$ one can look for the minimal norm solution or the generalized solution (pseudo-inverse solution).} operator, where $\mathcal{D}_T \subset \mathcal{H}_1$ and $g \in R(T)$
is a known function defined on $[a, b] \subset \mathbb{R}$, a bounded set. Here $\mathcal{H}_1$ and $\mathcal{H}_2$ are Hilbert spaces, that we consider to be subsets of $\L$. Then an approximate solution of problem \eqref{T Gen.} is attained in an iterative fashion using the following descent direction
\begin{equation}\label{smoothed gradient}
    \xi_m^\delta = T^*(I - \Delta^{-1})(g_\delta - T\psi_m^\delta),
\end{equation}
where the integral operator $-\Delta^{-1}$ is defined as follows: for any $f \in \L$, $y = -\Delta^{-1}(f)$ is the solution of the following boundary value problem
\begin{gather}\label{operator L}
    -y'' = f,\\
    y(a) = 0, \;y(b)=0. \notag
\end{gather}
Similar to \eqref{LS solution}, we show that the Variational formulation, related to the gradient defined in \eqref{smoothed gradient}, corresponds to the minimization of the following functional
\begin{eqnarray}\label{G Gen.}
G(\psi) = \Lnorm{T\psi - g_\delta}^2 + \Lnorm{ u_\psi' - u_\delta'}^2,
\end{eqnarray}
where $u_\psi := -\Delta^{-1}(T\psi)$ and $u_\delta := -\Delta^{-1}(g_\delta)$, and the operator $-\Delta^{-1}$ is as defined above in \eqref{operator L}. Equivalently, 
\begin{align}\label{uH20}
    u_\delta(x) := (-\Delta^{-1}g_\delta)(x) = \int_x^b \int_a^\eta g_\delta(\xi) \; d\xi d\eta \; - \; \frac{b-x}{b-a}\; \int_a^b \int_a^\eta g_\delta(\xi)\; d\xi d\eta. 
\end{align}

For the exact $g$, the first term in equation \eqref{G Gen.} is minimized by the solution ($\varphi$) of the inverse problem and in \S \ref{Convexity Gen.} we prove that the second term also has the same minimizer, i.e., the solution of \eqref{T Gen.} is the minimizer of the functional $G$.

\begin{remark}
One may suspect that using an additional integral operator (in the second term of \eqref{G Gen.}) might increase the ill-posedness of the problem. But, as we are not using the noisy data $g_\delta$ directly for that term, rather a smoothed version of it (i.e., $u_\delta$) such that $||u_\delta - u|| \approx 0$, the ill-posedness of the problem is not deteriorated. In addition, the first part (involving the original operator $T$ and data $g_\delta$) together with a stopping criterion depending on $g_\delta$ also influence the recovery of a regularized solution ($\varphi^\delta$). In fact, we will show that with proper integration techniques and using both the terms one can attain even lower relative errors during the descent process than using only the first term and can also significantly reduce the semi-convergent nature of the relative errors, see Figures \ref{relative errors Deltax}, \ref{Hubble error descent} and \ref{Tomography error descent}.
\end{remark}

\begin{remark}
Note that, since (in practice) we only deal with discrete data sets, to solve an inverse problem in higher dimension ($n \geq 2$) one can convert the n-dimensional array to an one dimensional vector and apply the techniques developed for one dimensional problems. Hence, in this paper we develop the theories only for 1-dimensional problems and apply it solve n-dimensional problems ($n\geq 2$), like image deblurring and tomography, see Examples \ref{Example Image Deblurring} and \ref{Example Tomography}. 
\end{remark}

Here, we follow an improved descent algorithm for the inverse recovery of $\varphi^\delta$, where we first start with the normal $\mathcal{L}^2$-gradient of $G$ and then upgrade it to various other gradients, including the $\mathcal{H}^1$-gradient, to enhance the descent rate and efficiency of the recovery. Using an appropriate gradient is very crucial in the optimization process as it helps to retrieve the features of $\varphi$ more accurately, for example, the $\mathcal{H}^1$-gradient not only smooths the noisy $\mathcal{L}^2$-gradient but also helps in pre-conditioning certain desired boundary effects, depending on some prior information of the boundary data, (see Example \ref{Example Numerical Derivative}), details discussed in \S \ref{G-descent Gen.}.

In \S \ref{Convexity Gen.} we prove the convexity (and some other properties) of the functional $G$. In \S \ref{G-descent Gen.} we provide a descent algorithm to minimize the functional $G$ by using different gradients (or descent directions) depending on the scenarios, which is crucial for the minimization process. The convergence of the sequence of functions constructed during the descent process to a regularized solution $\varphi^\delta$ and its stability are discussed in \S \ref{Convg, Stab, Error Gen.}. 
To validate the numerical viability of the developed theory we perform numerous computational experiments, provided in \S \ref{Numerical Results}, on some classical inverse problems, like Fredholm type and Volterra type integral equations, and compare our results with the results obtained using certain standard regularization methods, like Tikhonov-type methods, CGLS, LSQR, TV and more, see Tables \ref{Table Numerical Derivative} and \ref{Imaging error comparison}.

\section{\textbf{Notations and Preliminaries}}
We adopt the following notations that are used throughout the paper. All functions are real-valued defined on a bounded closed domain $[a,b] \subset \mathbb{R}$. For $1\leq p < \infty$, $\mathcal{L}^p[a,b]$ := $(\mathcal{L}^p, \abs{\abs{.}}_{\mathcal{L}^p},[a,b])$ denotes the usual Banach space of $\abs{f}^p$ integrable functions on $[a,b]$ and the space $\mathcal{L}^\infty[a,b]:=$ $(\mathcal{L}^\infty,||.||_{\mathcal{L}^\infty}, [a,b])$ contains the essentially bounded measurable functions. Likewise the Sobolev space $\mathcal{H}^q[a,b] :=$ $(\mathcal{H}^q, ||.||_{\mathcal{H}^q},[a,b])$ contains all the functions for which $f,\;,f',\;,\cdots,\;f^{(q)} \; \in$ $\L$ and the space $\mathcal{H}^q_0[a,b] := \{ f \in \mathcal{H}^q : ``f \mbox{ vanishes at the boundary"} \}$. The spaces $\L$ and $\mathcal{H}^q$ are Hilbert spaces with inner-products denoted as $\Lprod{.}{.}$ and $(.,.)_{\mathcal{H}^q}$, respectively.  

\begin{remark}\label{Re2}
Note that, integrating the  data ($g \in \L$) twice results in $u \in \HH \subset \L$. This is particularly very significant in the sense that we are able to upgrade the smoothness of the working data or information space from $\L$ to $\HH$ and hence improve the smoothness of the recovery. 
\end{remark}

\begin{remark}
Also, note that the negative Laplacian operator $-\Delta = - \frac{\partial^2}{\partial x^2}$ is a positive operator in $\L$ on $\mathcal{D}_\Delta = \HHO$. And using the positiveness of the operator $-\Delta$ on $\HHO$ we can have a bound for $G(\psi)$ as
\begin{align}\label{Gbounds Gen.}
    \left( 1 + \frac{1}{\lambda_1} \right)^{-1} \HHnorm{u - u_\psi}^2 \leq G(\psi) \leq \HHnorm{u - u_\psi}^2\;,
\end{align}
where $\lambda_1 > 0$ is the first (smallest) eigenvalue of the positive operator $-\Delta$. Inequality \eqref{Gbounds Gen.} also implies, for a sequence of functions $\{ \psi_m \} \subset \L$, $G(\psi_m) \rightarrow 0$ if and only if $\HHnorm{u - u_{\psi_m}} \rightarrow 0$, details in \S \ref{Convg, Stab, Error Gen.}.
\end{remark}

\section{\textbf{Convexity of the functional G}}\label{Convexity Gen.}
Now let us define functionals $G_1$ and $G_2$ corresponding to the first and the second terms of the functional $G$ as follows
\begin{align}
G_1(\psi) &:= \Lnorm{T\psi - g}^2, \label{G1 Gen.}\\
G_2(\psi) &:= \Lnorm{u' - u_\psi'}^2. \label{G2 Gen.}
\end{align}
We first analyze the functional $G_2$ and then extend it to the functional $G$. 

\begin{theorem}\label{G2prop Gen.}
\end{theorem}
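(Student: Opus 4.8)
The plan is to establish the three natural properties of $G_2$ that the section advertises: that $G_2$ is (strictly) convex, that its $\mathcal{L}^2$-gradient has the smoothed form anticipated in \eqref{smoothed gradient}, and that its unique minimizer is the exact solution $\varphi$. The backbone of all three is the observation that $\psi \mapsto u_\psi = -\Delta^{-1}(T\psi)$ is \emph{linear}, being the composition of the two linear operators $T$ and $-\Delta^{-1}$; hence $\psi \mapsto u' - u_\psi'$ is affine. Since $G_2(\psi) = \Lnorm{u' - u_\psi'}^2$ is the squared $\L$-norm of this affine map, convexity is immediate by expanding $G_2(t\psi_1 + (1-t)\psi_2)$ and invoking the convexity of $\Lnorm{\cdot}^2$ together with the linearity of $u_\cdot'$; strictness follows once one knows the affine map is injective, which I establish below.

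Next, to extract the gradient I would compute the G\^ateaux derivative. Writing $v_h := -\Delta^{-1}(Th)$, linearity gives $u_{\psi + th} = u_\psi + t\,v_h$, so that
\[ G_2(\psi + th) = \Lnorm{u' - u_\psi'}^2 - 2t\,\Lprod{u' - u_\psi'}{v_h'} + t^2 \Lnorm{v_h'}^2, \]
and the first-order term identifies the derivative as $-2\Lprod{u' - u_\psi'}{v_h'}$. The key manipulation is an integration by parts in this inner product: since $v_h$ vanishes at $a$ and $b$ by \eqref{operator L}, the boundary term drops, leaving $-2\Lprod{u'' - u_\psi''}{v_h}$ up to sign; substituting the defining relation $-(u - u_\psi)'' = g - T\psi$ and then using the self-adjointness of $-\Delta^{-1}$ on $\L$ transports this back to $-2\Lprod{T^*(-\Delta^{-1})(g - T\psi)}{h}$. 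Reading off the Riesz representative yields
\[ \lgrad{} G_2(\psi) = -2\,T^*(-\Delta^{-1})(g - T\psi), \]
which matches the second summand of the descent direction in \eqref{smoothed gradient}.

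Finally, for the minimizer I would note that $T\varphi = g$ forces $u_\varphi = -\Delta^{-1}(T\varphi) = -\Delta^{-1}(g) = u$, hence $G_2(\varphi) = 0$; as $G_2 \geq 0$ everywhere, $\varphi$ is a global minimizer. For uniqueness (equivalently, injectivity of the affine map and thus strict convexity) I would argue that $G_2(\psi) = 0$ implies $(u - u_\psi)' \equiv 0$, so $u - u_\psi$ is constant; the homogeneous Dirichlet conditions built into $-\Delta^{-1}$ force that constant to vanish, giving $u = u_\psi$, whence applying $-\Delta$ returns $g - T\psi = 0$, and the injectivity of $T$ delivers $\psi = \varphi$.

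The main obstacle I anticipate is the careful bookkeeping in the integration-by-parts step: verifying precisely which boundary terms vanish, and correctly invoking the self-adjointness of $-\Delta^{-1}$ together with the boundary-value characterization \eqref{operator L}, since the derivative first appears in the $u'$-variables and must be moved back into the $T^*$-variables without spurious boundary contributions. By contrast, the convexity and the minimizer claims reduce cleanly to the affine structure of $\psi \mapsto u' - u_\psi'$ and to the injectivity of $T$, and I expect them to require no delicate estimates.
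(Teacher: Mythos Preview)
Your argument is correct and lands on the same gradient formula and the same conclusions as the paper, but the route differs in two places worth flagging. First, the paper isolates a preliminary Lemma showing $u_{\psi+\epsilon h}\to u_\psi$ in $\H$ and then computes the G\^ateaux derivative as a genuine limit of difference quotients; you sidestep this entirely by observing that $\psi\mapsto u_\psi$ is linear, so $u_{\psi+th}=u_\psi+t\,v_h$ exactly and $G_2(\psi+th)$ is a quadratic polynomial in $t$ --- this is cleaner and makes the auxiliary lemma unnecessary. Second, the paper obtains strict convexity by computing the second G\^ateaux differential $G_2''(\psi)[h,h]=2\Lprod{-\Delta^{-1}(Th)}{Th}$ and invoking positivity of $-\Delta$ on $\HHO$, whereas you get it from the affine--squared-norm structure together with injectivity of $\psi\mapsto u_\psi'$; both are valid, but note that the paper's version yields item~(4) of the theorem statement, which your proposal does not address explicitly. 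One minor point on your integration by parts: it is slightly shorter to move the derivative onto $v_h$ (using $v_h''=-Th$ and $u-u_\psi\in\HHO$) rather than onto $u-u_\psi$; this lands directly on $\Lprod{Th}{-2(u-u_\psi)}$ and avoids the appeal to self-adjointness of $-\Delta^{-1}$. The paper takes that shorter path, but yours is equally correct.
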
{}
\begin{enumerate}
    \item An equivalent form of $G_2$, for any $\psi \in \L$, is
    \begin{align}\label{G2equiv Gen.}
        G_2(\psi) = \int_a^b ({u'}^2 - {u_\psi '}^2) - 2(T \psi)(u - u_\psi) \; dx
    \end{align}
    \item For any two $\psi_1, \; \psi_2 \in \L$, we have
    \begin{equation}\label{G2psi1-psi2 Gen.}
        G_2(\psi_1) - G_2(\psi_2) = \int_a^b -2(T(\psi_1 - \psi_2))(u - \frac{u_{\psi_1} + u_{\psi_2}}{2}) \; dx
    \end{equation}
    \item The first G\^{a}teaux differential, at $\psi \in \L$, for $G_2$ is given by 
    \begin{equation}\label{G2prime Gen.} 
        G_2'(\psi)[h] = \int_a^b (T h)(-2(u - u_\psi)) \; dx
    \end{equation}
    where $h \in \L$. The $\mathcal{L}^2$-gradient of $G_2$, at $\psi$, is given by
    \begin{equation}\label{G2grad Gen.}
        \nabla_{\mathcal{L}^2}^\psi G_2 = T^*(-2(u - u_\psi)),    
    \end{equation}
    where the $T^*$ is the adjoint of the operator $T$.
    \item The second G\^{a}teaux differential, at any $\psi \in \L$, of $G_2$ is given by 
    \begin{equation}\label{G2doubleprime Gen.}
        G_2''(\psi)[h,k] = 2\Lprod{-\Delta^{-1}(T h)}{(T k)}
    \end{equation}
    where $h, \; k \in \L$ and $\Delta = \frac{\partial^2}{\partial x^2}$ on $\HHO$. Hence for any $\psi \in \L$, $G_2''(\psi)$ is a positive definite quadratic form. 
\end{enumerate}
Thus $G_2$ is a strictly convex\footnote{the strict convexity follows if the operator $T$ is injective, otherwise, $G_2$ is a convex functional.} functional and has a unique global minimum which is attained at $\varphi$.

First we state an ancillary result related to $u_\psi$.
\begin{lemma}\label{ueconvg Gen.}
For fixed $\psi, \; h \in \L$, we have
\begin{equation}
    \lim_{\epsilon \rightarrow 0} u_{\psi + \epsilon h} = u_\psi\;,
\end{equation}
in $\H$.
\end{lemma}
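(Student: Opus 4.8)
The plan is to exploit the fact that the map $\psi \mapsto u_\psi = -\Delta^{-1}(T\psi)$ is linear and bounded from $\L$ into $\H$, which collapses the claimed convergence into an almost trivial estimate. First I would observe that $T$ is linear by hypothesis and that the solution operator $-\Delta^{-1}$ of the boundary value problem \eqref{operator L} is linear as well; the latter is transparent from the explicit representation \eqref{uH20}. Consequently
\begin{equation*}
u_{\psi + \epsilon h} = -\Delta^{-1}\big(T(\psi + \epsilon h)\big) = -\Delta^{-1}(T\psi) + \epsilon\,\big(-\Delta^{-1}(Th)\big) = u_\psi + \epsilon\, u_h,
\end{equation*}
where I write $u_h := -\Delta^{-1}(Th)$. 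Hence $u_{\psi+\epsilon h} - u_\psi = \epsilon\, u_h$, so the difference is exactly linear in $\epsilon$ and it remains only to check that $u_h$ has finite $\H$-norm.

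The one substantive step is therefore to show that $-\Delta^{-1}$ maps $\L$ boundedly into $\H$. Since $h \in \L$ and $T$ is bounded, $Th \in \L$; writing $y = u_h = -\Delta^{-1}(Th)$, the function $y$ solves $-y'' = Th$ with $y(a)=y(b)=0$, so in particular $y \in \HHO$ (cf. Remark \ref{Re2}). A standard energy estimate — multiply $-y'' = Th$ by $y$, integrate over $[a,b]$, note that the boundary term $[y'y]_a^b$ vanishes because $y \in \HO$, and apply the Poincar\'e inequality on $\HO$ — gives
\begin{equation*}
\Lnorm{y'}^2 = \Lprod{Th}{y} \leq \Lnorm{Th}\,\Lnorm{y} \leq C_P\,\Lnorm{Th}\,\Lnorm{y'},
\end{equation*}
whence $\Lnorm{y'} \leq C_P\,\Lnorm{Th}$ and then $\Lnorm{y} \leq C_P^2\,\Lnorm{Th}$, with $C_P$ the Poincar\'e constant of $[a,b]$. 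Combining the two bounds yields $\Hnorm{u_h} \leq C\,\Lnorm{Th} \leq C\,||T||\,\Lnorm{h}$ for a constant $C$ depending only on the interval.

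Finally, combining the two steps,
\begin{equation*}
\Hnorm{u_{\psi+\epsilon h} - u_\psi} = \abs{\epsilon}\,\Hnorm{u_h} \leq \abs{\epsilon}\,C\,||T||\,\Lnorm{h},
\end{equation*}
which tends to $0$ as $\epsilon \to 0$, giving the assertion. I do not anticipate a genuine obstacle: the lemma is in essence the continuity (indeed Lipschitz continuity in $\epsilon$) of the bounded linear operator $-\Delta^{-1}\circ T$, and the only point requiring a little care is the elliptic/Poincar\'e estimate establishing that $-\Delta^{-1}$ supplies the regularity needed to control the full $\H$-norm — equivalently, the content of Remark \ref{Re2} that integrating an $\L$ datum twice lands in $\HH \subset \H$.
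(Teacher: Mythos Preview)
Your proof is correct and follows essentially the same route as the paper: both arguments reduce to the identity $u_{\psi+\epsilon h}-u_\psi = \epsilon\,(-\Delta^{-1})(Th)$ (the paper obtains it by subtracting the two boundary value problems, you by invoking linearity of $-\Delta^{-1}\circ T$ directly) and then bound the $\H$-norm via the same energy/Poincar\'e estimate, the paper phrasing the constant through the smallest eigenvalue $\lambda_1$ of $-\Delta$ where you write $C_P$.
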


\begin{proof}
Subtracting the following equations
\begin{align*}
    - u_\psi '' &= T\psi \\
    - u_{\psi + \epsilon h}'' &= T(\psi + \epsilon h)\;,
\end{align*}
we have 
\begin{equation}
    -(u_{\psi + \epsilon h} - u_\psi)'' = \epsilon \; T h \label{Linverse}
\end{equation}
and using $u_{\psi + \epsilon h} - u_\psi \in \HHO$ we get, via integration by parts,
\begin{align*}
 \Lprod{(u_{\psi + \epsilon h} - u_\psi)'}{(u_{\psi + \epsilon h} - u_\psi)'} = \epsilon \;\Lprod{T h}{u_{\psi + \epsilon h} - u_\psi}.
\end{align*}
Now using the positiveness of the negative laplacian, we have ${\lambda_1} \Lnorm{u_{\psi+\epsilon h} - u_\psi}^2 \leq \Lnorm{(u_{\psi+\epsilon h} - u_\psi)'}^2$, where $\lambda_1>0$ is the smallest eigenvalue of $-\Delta$, and thus
\begin{align}\label{O(e)}
 &  (1 + \lambda_1^{-1})^{-1}\; \Hnorm{u_{\psi + \epsilon h} - u_\psi}^2 \leq \Lnorm{(u_{\psi + \epsilon h} - u_\psi)'}^2 = \epsilon \;\Lprod{Th}{u_{\psi + \epsilon h} - u_\psi} \notag,\\
& \Longrightarrow	(1 + \lambda_1^{-1})^{-1}\; ||{u_{\psi + \epsilon h} - u_\psi}||_{\mathcal{H}^1}  \leq \; \epsilon \; \Lnorm{Th}, \notag
\end{align}
where the last line is obtained using the Cauchy-Schwarz inequality. Hence, \\$u_{\psi + \epsilon h} \xrightarrow{ \epsilon \rightarrow 0} u_\psi$ in $\H$, since the operator $T$ is bounded and $\psi, \; h \in \L$ are fixed, which implies the right hand side is of $O(\epsilon)$. 
\end{proof}{}

\subsection*{Proof of Theorem \ref{G2prop Gen.}}
The proof of first two properties (i) and (ii) are straight forward via integration by parts and using the fact that $u - u_\psi \in \HHO$. In order to prove (3) and (4), we use Lemma \ref{ueconvg Gen.}.

    \begin{itemize}
    \item[(3)] The G\^{a}teaux derivative of the functional $G_2$ at $\psi$ in the direction of $h$, where $h \in \L$, is given by 
    \begin{equation}\label{Gpepsilon}
        G_2'(\psi)[h] = \lim_{\epsilon \rightarrow 0} \frac{G_2(\psi + \epsilon h) - G_2(\psi)}{\epsilon}.
    \end{equation}
    Now for a fixed $\epsilon > 0$, we have using \ref{Linverse},
\begin{align*}
   & \frac{G_2(\psi + \epsilon h) - G_2(\psi)}{\epsilon} \\&= \epsilon^{-1} \iab (u' - u_{\psi + \epsilon h}')^2 - (u' - u_\psi')^2 \; dx \\
    &= \epsilon^{-1} \iab (u_\psi' - u_{\psi + \epsilon h}')(2u' - (u_{\psi + \epsilon h} + u_\psi')) \; dx\\
    &= \epsilon^{-1} \iab -(u_\psi - u_{\psi + \epsilon h})''(2u - (u_{\psi + \epsilon h} + u_\psi)) \; dx\\
    &= \epsilon^{-1} \iab - \epsilon \; (Th)(2u - (u_{\psi + \epsilon h} + u_\psi))\; dx\\
    &= -\Lprod{Th}{2u - (u_{\psi + \epsilon h} + u_\psi)}.
\end{align*}
Using Lemma \ref{ueconvg Gen.}, one obtains the G\^{a}teaux derivative of $G_2$ at $\psi \in \L$ in the direction of $h \in \L$ as
\begin{equation*}
    G_2'(\psi)[h] = \Lprod{T h}{-2(u - u_\psi)}\; .
\end{equation*}
Note that  $\Lprod{T h}{-2(u - u_\psi)} = \Lprod{h}{T^*(-2(u - u_\psi))}$ for all $h \in \L$, where $T^*$ is the adjoint of the operator $T$, as $T \in \mathcal{B}(\mathcal{H}_1, \mathcal{H}_2)$. Hence, by Riesz representation theorem, the $\mathcal{L}^2$-gradient of the functional $G_2$ at $\psi$ is given by 
\begin{equation*}
    \lgrad{\psi}G = T^* (-2(u - u_\psi)).
\end{equation*}

\item[(4)] Finally, the second G\^{a}teaux derivative for the functional $G_2$ at $\psi$ is given by 
    \begin{equation}\label{Gpp}
        G_2''(\psi)[h,k] = \lim_{\epsilon \rightarrow 0} \frac{G_2'(\psi + \epsilon h)[k] - G_2'(\psi)[k]}{\epsilon}
    \end{equation}
    Again for a fixed $\epsilon > 0$, we have using \eqref{Linverse}
\begin{align*}
   & \frac{G_2'(\psi + \epsilon h)[k] - G_2'(\psi)[k]}{\epsilon} \\&= \epsilon^{-1} \iab (T k)(-2(u - u_{\psi + \epsilon h})) - (T k)(-2(u - u_\psi)) dx \\
    &= \epsilon^{-1} \iab -2(T k)(u_\psi - u_{\psi + \epsilon h}) \; dx\\
    &= \epsilon^{-1} \iab -2(T k)(\epsilon \; \Delta^{-1}(T h))\; dx \\
    &= 2 \iab (T k)(- \Delta^{-1}(T h)) \; dx\\
    &= 2 \; \Lprod{-\Delta^{-1}(Th)}{T k} \; .
\end{align*}
    Hence from (\ref{Gpp}) and letting $\epsilon \rightarrow 0$ we get
    $$G_2''(\psi)[h,k] = 2 \; \Lprod{-\Delta^{-1}(Th)}{Tk} \;.$$
    Here we can see the strict convexity of the functional $G_2$, as for any $h \in \L$, we have
    \begin{align*}
        G_2''(\psi)[h,h] &= 2\Lprod{-\Delta^{-1}(T h)}{(T h)} \\
        &= 2\Lprod{y}{-\Delta y},
    \end{align*}
    where $-\Delta y = Th$ and $y \in \HHO$ (from (\ref{Linverse})). As $-\Delta$ is a positive operator on $\HHO$, $y$ is the trivial solution iff $T h = 0$ iff $h \equiv 0$ (T being strictly convex). Thus $G_2''(\psi)$ is a positive definite form for any $\psi \in \L$. \qed
\end{itemize}

Now one can similarly prove the above properties for functional $G_1$ and extend it for functional $G$ as well, which is as follows:
\begin{theorem}\label{Gprop Gen.} \mbox{ }
\begin{enumerate}
    \item  For any two $\psi_1, \; \psi_2 \in \L$, we have
    \begin{equation}\label{Gc1-Gc2 Gen.}
        G(\psi_1) - G(\psi_2) = \int_a^b -2T(\psi_1 - \psi_2)\left[ \left( g - \frac{T\psi_1 + T\psi_2}{2} \right) + \left( u - \frac{u_{\psi_1} + u_{\psi_2}}{2} \right)  \right]dx
    \end{equation}
    \item The first G\^{a}teaux differential, at $\psi \in \L$, for G is given by 
    \begin{equation}\label{G' Gen.}
        G'(\psi)[h] = \int_a^b (T h)(-2( (g - T\psi) + (u - u_\psi))) \; dx
    \end{equation}
    where $h \in \L$. The $\mathcal{L}^2$-gradient of $G$, at $\psi$, is given by
    \begin{equation}\label{Gl2grad Gen.}
        \nabla_{\mathcal{L}^2}^\psi G = -2T^*( (g - T\psi) + (u - u_\psi)),    
    \end{equation}
    where $T^*$ is the adjoint of the operator $T$.
    \item The second G\^{a}teaux differential, at any $\psi \in \L$, of G is given by 
    \begin{equation}\label{G'' Gen.}
        G''(\psi)[h,k] = 2\Lprod{ Th -\Delta^{-1}(T h)}{T k}
    \end{equation}
    where $h, k \in \L$. Hence for any $\psi \in \L$, $G''(\psi)$ is a positive definite quadratic form. 
\end{enumerate}
\end{theorem}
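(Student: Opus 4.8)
The plan is to exploit the additive decomposition $G = G_1 + G_2$ with $G_1, G_2$ as in \eqref{G1 Gen.}--\eqref{G2 Gen.}, together with the fact that forming a G\^{a}teaux differential is a linear operation on functionals. Since the difference quotients of $G$ split as the sum of those of $G_1$ and $G_2$, and each limit is already known to exist, one obtains $G'(\psi)[h] = G_1'(\psi)[h] + G_2'(\psi)[h]$ and $G''(\psi)[h,k] = G_1''(\psi)[h,k] + G_2''(\psi)[h,k]$ for free. All the $G_2$-contributions are supplied verbatim by Theorem \ref{G2prop Gen.}: the difference identity \eqref{G2psi1-psi2 Gen.}, the first differential \eqref{G2prime Gen.}, and the second differential \eqref{G2doubleprime Gen.}. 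Hence the only genuinely new computations are those for the ordinary least-squares term $G_1(\psi) = \Lnorm{T\psi - g}^2$.

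I would dispatch $G_1$ by direct expansion, the classical least-squares calculation. Writing $G_1(\psi + \epsilon h) = \Lnorm{T\psi - g}^2 + 2\epsilon \Lprod{Th}{T\psi - g} + \epsilon^2 \Lnorm{Th}^2$ and letting $\epsilon \to 0$ in the difference quotient gives $G_1'(\psi)[h] = \Lprod{Th}{-2(g - T\psi)}$; pushing $T$ onto the other factor through its adjoint yields the $\mathcal{L}^2$-gradient $-2T^*(g - T\psi)$. Differentiating once more (or expanding again) produces the $\psi$-independent form $G_1''(\psi)[h,k] = 2\Lprod{Th}{Tk}$. For the difference identity I would apply the real-Hilbert-space identity $\Lnorm{A}^2 - \Lnorm{B}^2 = \Lprod{A-B}{A+B}$ with $A = T\psi_1 - g$ and $B = T\psi_2 - g$, which telescopes to $\iab -2 T(\psi_1 - \psi_2)\left(g - \tfrac{T\psi_1 + T\psi_2}{2}\right) dx$.

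Assembling the two summands then settles all three claims at once. Adding the $G_1$ difference to \eqref{G2psi1-psi2 Gen.} gives \eqref{Gc1-Gc2 Gen.}; adding the gradients gives $-2T^*\big((g - T\psi) + (u - u_\psi)\big)$, which is \eqref{Gl2grad Gen.}, and since $u - u_\psi = -\Delta^{-1}(g - T\psi)$ by the definitions of $u$ and $u_\psi$, this is exactly the smoothed gradient $-2T^*(I - \Delta^{-1})(g - T\psi)$ announced in \eqref{smoothed gradient}. For the Hessian, summing $2\Lprod{Th}{Tk}$ with \eqref{G2doubleprime Gen.} yields $G''(\psi)[h,k] = 2\Lprod{(I - \Delta^{-1})Th}{Tk}$, which matches \eqref{G'' Gen.} once the shorthand $Th - \Delta^{-1}(Th)$ is read as $(I - \Delta^{-1})Th$. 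The one point deserving care --- the crux rather than a real obstacle --- is positive definiteness: evaluating on $h = k$ gives $G''(\psi)[h,h] = 2\Lnorm{Th}^2 + 2\Lprod{-\Delta^{-1}(Th)}{Th}$, where both terms are nonnegative (the second because $-\Delta^{-1}$ is a positive operator on $\HHO$, exactly as invoked in item (4) of Theorem \ref{G2prop Gen.}), and their sum vanishes only if $Th = 0$, hence only if $h \equiv 0$ by the injectivity of $T$. No new estimates are required beyond those already established for $G_2$; the whole argument is bookkeeping built on the linearity of differentiation and the positivity of $-\Delta^{-1}$.
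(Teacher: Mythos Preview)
Your proposal is correct and matches the paper's own approach: the paper does not write out a separate proof of Theorem \ref{Gprop Gen.} at all, but simply remarks that ``one can similarly prove the above properties for functional $G_1$ and extend it for functional $G$ as well,'' i.e., exactly the $G = G_1 + G_2$ decomposition you carry out. Your explicit computations for $G_1$ and the assembly step (including the positive-definiteness argument via injectivity of $T$ and positivity of $-\Delta^{-1}$) are all sound and in fact supply more detail than the paper itself provides.
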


\begin{section}{\textbf{The Descent Algorithm for G}}\label{G-descent Gen.}
In this section we discuss the problem of minimizing the functional $G$, via a descent method. One can guess a descent direction by looking at the truncated Taylor expansion of the functional $G$, which is  
\begin{align}\label{taylor Gen.}
G(\psi - \gamma h) - G(\psi) \approx -\gamma G'(\psi)[h],
\end{align}
for any $\psi$, $h \in \L$, and sufficiently small $\gamma >0$. Thus $G$ is minimized at $\psi$ if the direction $h$ is chosen such that $G'(\psi)[h]>0$ for an appropriate $\gamma$.

The followings are some descent directions that can make $G'(\psi)[h] > 0$.

\subsection{The $\mathcal{L}^2$-Gradient.}\label{L2-Gradient}\mbox{}\\ 
First, notice from Theorem \ref{Gprop Gen.} that at a given point $\psi \in \L$
\begin{equation}\label{Gl2gradeq Gen.}
    G'(\psi)[h] = \Lprod{h}{\lgrad{\psi}G} \;,
\end{equation}
so if we choose the direction $h = \lgrad{\psi}G := -2T^*(u - u_\psi + g - T\psi)$ at $\psi$, then $G'(\psi)[h] > 0$. Though this gradient works well in most situations, there are certain issues associated with it during
the descent process. From a theoretical point of view, if $T^*(.)(x_0) = 0$ for some $x_0 \in [a,b]$ at every step of the descent process then $\psi_{0}(x_0)$ will be invariant during the descent process and if $\psi_{0}(x_0) \neq \varphi(x_0)$ then it will lead to severe decay or fluctuation near the point $x_0$, for details see \cite{Abinash1}. For example, if $T = \int_a^x(.) dt$ is a Volterra operator then the $\mathcal{L}^2$-gradient at any $\psi$ is always zero at the end point $b$, since $T^*(.) = \int_x^b(.) dt$, which implies that at the end point $b$ the recovery will be invariant during the descent process, see Example \ref{Example Numerical Derivative}. 

\subsection{The $\mathcal{H}^1$-Gradient.}\mbox{}\\
One can circumvent the above problem by opting for the Sobolev or Neuberger gradient, $\hgrad{\psi}G$, instead. It is the solution of the following boundary value problem 
\begin{gather}\label{h1gradeq Gen.}
    -\phi'' + \phi = \lgrad{\psi}G, \notag \\
    [\phi'\phi]_a^b = 0.
\end{gather}
This provides us a gradient $\hgrad{\psi}G := \phi$, at any $\psi$, with considerably more flexibility at the boundary points $\{ a,b \}$. In particular one has
\begin{enumerate}
    \item Dirichlet Neuberger gradient : $\phi(a) = 0$ and $\phi(b) = 0 $.
    \item Neumann Neuberger gradient : $\phi'(a)=0$ and $\phi'(b) = 0$.
    \item Robin or Mixed Neuberger gradient : $\phi(a) = 0$ and $\phi'(b)=0$ or $\phi'(a)=0$ and $\phi(b)=0$.
\end{enumerate}
In addition to the flexibility at the end points, it also enables the new gradient to be a preconditioned (smoothed) version of $\lgrad{\psi}G$, as $\phi = (I - \Delta)^{-1}\;\lgrad{\psi}G$, and hence gives a superior convergence in the steepest descent algorithms when recovering a smooth function, see Example \ref{Example Numerical Derivative}. One can also exploit the flexibility of the gradient at the end points according to some prior information of $\varphi$ at the end points. For example, if prior knowledge of $\varphi(a)$ and $\varphi(b)$ are known, then one can define $\varphi_{initial}$ as the straight line joining them and use the Dirichlet Neubeger gradient for the descent. Thus the boundary data is preserved in each of the evolving $\psi_m$'s during the descent process which leads to a more efficient and faster descent than compared to the normal $\mathcal{L}^2$-gradient. Even when $\varphi|_{\{a,b\}}$ is unknown, one can use the Neumann Neuberger gradient that allows free movements at the boundary points rather than gluing to some fixed values, see Example \ref{Example Numerical Derivative}. 

\subsection{The $\mathcal{L}^2 - \mathcal{H}^1$ Conjugate Gradient.} \label{conjg. grad. sec.} \mbox{}\\
Now, one can make use of both the gradients by taking computing the standard Polak-Ribi\'{e}re conjugate gradient scheme (see \cite{Knowles2004}) to further boost the descent rate, approximately by a factor of 2. Specifically, the initial search direction at $\psi_0$ is $h_0 = \phi_0 = \hgrad{\psi_0}G$. At $\psi_m$ one can use an exact or inexact line search routine to minimize $G$ in the direction of $h_m$ resulting in $\psi_{m+1}$. Then $\phi_{m+1} = \hgrad{\psi_{m+1}}G$ and $h_{m+1} = \phi_{m+1} + \gamma_m h_m$ where
\begin{equation}\label{conjugate gradient Gen.}
    \gamma_m = \frac{\Hprod{\phi_{m+1} - \phi_m}{\phi_{m+1}}}{\Hprod{\phi_m}{\phi_m}} = \frac{\Lprod{\phi_{m+1} - \phi_m}{\lgrad{\psi_{m+1}}G}}{\Lprod{\phi_m}{\lgrad{\psi_m}G}}. 
\end{equation}

\subsection{\textbf{The line search method for the functional G}}\mbox{}\\
At any given $\psi_m \in \L$ the functional $G$ is minimzed in the gradient direction via the single variable function $f_{m}(\gamma) = G(\psi_{m+1}(\gamma))$ and using a line search minimization, where $\psi_{m+1}(\gamma)= \psi_{m} - \gamma \hgrad{\psi_{m}}G$. To bracket the minimum the initial step size $\gamma_0$ is chosen by solving the quadratic approximation of the function $f_m$ (see \cite{Abinash1}), which is given by 
\begin{equation}\label{alpha0}
    \gamma_0 = \frac{G'(\psi_m)[\phi_m]}{G''(\psi_m)[\phi_m,\phi_m]}\;,
\end{equation}
and since the expressions for $G''$ and $G'$ are known, one can easily compute $\gamma_0$.

Hence during the descent process, starting from an initial guess $\psi_0 \in \L$, we obtain a sequence of $\mathcal{L}^2$-functions $\psi_m$ for which the sequence $\{ G(\psi_m) \geq 0\}$ is strictly decreasing. In the next section, we discuss the convergence of $\{ \psi_m \}$ to $\varphi$ and the stability of the recovery.

\end{section}

\section{\textbf{Convergence and Regularization}}\label{Convg, Stab, Error Gen.}

\textbf{Exact data:} We first prove that the sequence of functions constructed during the descent process converges to the exact solution in the absence of any noise and then proves the stability of the process in the presence of noise.

\subsection{Convergence}\label{convg Gen.}
Note that, for the sequence $\{ \psi_m\}$ produced by the steepest descent algorithm (described in Section \ref{G-descent Gen.}) we have $G(\psi_m) \rightarrow 0$, as the functional $G$ is non-negative and strictly convex (with the global minimizer $\varphi$, such that G($\varphi$) = 0) and $G(\psi_{m+1})< G(\psi_m)$. The following theorems describe the convergence of the sequence \{$\psi_m$\} to $\varphi$ in $\L$.

We first prove the convergence result for $G_2$ and then extend it for $G$.
\begin{theorem}\label{convg1 Gen.}
Suppose that $\{ \psi_m \}$ is any sequence  of $\mathcal{L}^2$-functions such that the sequence $\{ G_2(\psi_m) \}$ tends to zero. Then $\{ \psi_m \}$, and the corresponding sequence $\{ g_m := T\psi_m \}$, converge in $\L$ to $\varphi$ and $g$, respectively, and the sequence $\{ u_{\psi_m} \}$ converges to $u$ in $\H$.
\end{theorem}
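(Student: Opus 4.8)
The plan is to separate the three assertions according to how much smoothing insulates each object from the quantity that is actually controlled, $G_2(\psi_m)=\Lnorm{u'-u_{\psi_m}'}^2\to 0$, and to argue outward from the most regular object $u_{\psi_m}$ toward $g_m$ and finally $\psi_m$.

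First I would prove the $\H$-convergence of $u_{\psi_m}$, which is the unconditional part. Both $u=-\Delta^{-1}g$ and $u_{\psi_m}=-\Delta^{-1}(T\psi_m)$ lie in $\HHO$, so $u-u_{\psi_m}\in\HHO$ and the positiveness of $-\Delta$ gives the Poincar\'e-type bound $\lambda_1\Lnorm{u-u_{\psi_m}}^2\le\Lnorm{(u-u_{\psi_m})'}^2$, with $\lambda_1>0$ the smallest eigenvalue, exactly as used in the proof of Lemma \ref{ueconvg Gen.}. Hence $\Lnorm{u-u_{\psi_m}}^2\le\lambda_1^{-1}G_2(\psi_m)\to 0$, and since also $\Lnorm{(u-u_{\psi_m})'}^2=G_2(\psi_m)\to 0$ one gets $(1+\lambda_1^{-1})^{-1}\Hnorm{u-u_{\psi_m}}^2\le G_2(\psi_m)\to 0$, i.e. $u_{\psi_m}\to u$ in $\H$.

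For $g_m\to g$ in $\L$ I would use the representation $G_2(\psi_m)=\Lprod{T(\varphi-\psi_m)}{-\Delta^{-1}T(\varphi-\psi_m)}$, obtained by integrating the defining identity $-u_\psi''=T\psi$ by parts, together with $g-g_m=T(\varphi-\psi_m)$. Since $-\Delta^{-1}$ is compact and injective, the mere fact that $-\Delta^{-1}(g-g_m)=u-u_{\psi_m}\to 0$ in $\L$ does not force $g-g_m\to 0$; the way around this is the compactness of $T$. Assuming the sequence $\{\psi_m\}$ is bounded in $\L$---which I would have to verify separately, since $G_2$ alone need not be coercive---the set $\{g-g_m\}=\{T(\varphi-\psi_m)\}$ is relatively compact in $\L$, every strongly convergent subsequence has a limit $\ell$ with $-\Delta^{-1}\ell=0$ and hence $\ell=0$ by injectivity, so the whole sequence satisfies $g_m\to g$ in $\L$.

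The last assertion, $\psi_m\to\varphi$ in $\L$, is where I expect the \emph{main obstacle} to sit. Formally it should follow from $g_m\to g$, the injectivity of $T$, and the uniqueness of the minimizer of the strictly convex $G_2$ (Theorem \ref{G2prop Gen.}): passing to a weakly convergent subsequence of the bounded sequence $\{\psi_m\}$, one identifies the weak limit with $\varphi$ via weak lower semicontinuity of $G_2$ and $G_2(\varphi)=0$, and then tries to upgrade weak to strong convergence. But $G_2$ controls only the doubly-smoothed error $-\Delta^{-1}T(\varphi-\psi_m)$, so recovering $\psi_m$ in the $\L$-norm is precisely the ill-posed direction, and this weak-to-strong upgrade is the delicate point that must be driven by the compactness of $T$ together with the uniform bound on the iterates; absent such structure, only the $\H$-convergence of $u_{\psi_m}$ survives unconditionally, and the $\L$-convergence of $g_m$ and $\psi_m$ genuinely needs the extra compactness and boundedness input.
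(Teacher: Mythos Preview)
Your argument for $u_{\psi_m}\to u$ in $\H$ is exactly the paper's: Poincar\'e via the first eigenvalue of $-\Delta$ on $\HHO$ gives $(1+\lambda_1^{-1})^{-1}\Hnorm{u-u_{\psi_m}}^2\le G_2(\psi_m)\to 0$.

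For the remaining two assertions the paper takes a different route from yours. Rather than invoking compactness of $T$ and a uniform bound on $\{\psi_m\}$, the paper rewrites $G_2(\psi)=\Lnorm{L\psi-L\varphi}^2$ for the explicit bounded linear operator $L\psi:=u_\psi'$, namely
\[
(L\psi)(x)=-\int_a^x (T\psi)(\xi)\,d\xi+\frac{1}{b-a}\int_a^b\!\!\int_a^\eta (T\psi)(\xi)\,d\xi\,d\eta,
\]
and then, having a standard least-squares residual for a bounded linear operator, cites the convergence theory for Landweber/steepest-descent iterations \cite{Engl+Hanke+Neubauer,Kaltenbacher_Neubauer_Scherzer} to conclude $\psi_m\to\varphi$ and $g_m\to g$ in $\L$. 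This buys the paper two things you are paying for by hand: it never assumes $T$ is compact (only bounded and injective), and it never separately verifies boundedness of the iterates, since both come packaged with the cited descent theory.

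Your instincts about where the obstacle lies are correct, and in fact they expose a subtlety in the paper's formulation: the cited Landweber/steepest-descent results apply to \emph{descent-generated} sequences, not to an arbitrary sequence with $G_2(\psi_m)\to 0$. Read literally, the hypothesis ``any sequence'' is too strong---precisely because, as you note, $-\Delta^{-1}T$ is the ill-posed direction and $G_2(\psi_m)\to 0$ alone cannot force $\psi_m\to\varphi$ strongly without extra structure. Your compactness-plus-boundedness route would recover weak convergence of $\psi_m$ and strong convergence of $g_m$, but the weak-to-strong upgrade for $\psi_m$ still cannot be closed along those lines; the paper resolves this by implicitly restricting to the descent iterates and appealing to the external reference.
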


\begin{proof}
First note that, using the positivity of $-\Delta$ in $\L$ on $\HHO$, we have
\begin{align}\label{G2bounds}
    \left( 1 + \frac{1}{\lambda_1} \right)^{-1} \Hnorm{u - u_\psi}^2 \leq G_2(\psi) = \Lnorm{u' - u_\psi'}^2 \leq \Hnorm{u - u_\psi}^2,
\end{align}
where $\lambda_1 = \frac{\pi^2}{(b-a)^2} > 0$ is the smallest eigenvalue of $-\Delta$. This immediately gives $u_{\psi_m} \xrightarrow{s} u$ in $\H$. Now one can easily see the weak convergence for the sequences $\{ \psi_m \}$ and $\{ g_m := T\psi_m \}$ in $\L$ to $\varphi$ and $g = T\varphi$, respectively. Since $u_{\psi_m} \xrightarrow{s} u$ in $\H$, this implies $u_{\psi_m} \xrightarrow{w} u$ in $\L$. Thus, for any $\psi$ in the range of $T^*$ we have 
\begin{align}
    \Lprod{u_{\psi_m} - u}{\psi} &= \Lprod{T(\psi_m - \varphi)}{\psi} \\
    &= \Lprod{\psi_m - \varphi}{T^*\psi}, \notag
\end{align}{}
implying $\psi_m \xrightarrow{w} \varphi$ in $R(T^*)$, and since the range of $T^*$ is dense in $\L$ for the linear, bounded and injective $T$, we have $\psi_m \xrightarrow{w} \varphi$ in $\L$. 

To see the strong convergence, we first express the functional $G_2$ in terms of an associated operator, i.e. $G_2(\psi) = \Lnorm{u_\psi' - u'}^2 = \Lnorm{L(\psi) - \psi_0}^2$, for some operator $L$ such that $L(\psi) = u_\psi'$. One can show that, from \eqref{uH20}, the function $u_\psi \in \HHO$ can be expressed as, for any $\psi \in \L$ and $x \in [a,b]$,
\begin{equation}
    u_\psi(x) = \int_x^b \int_a^\eta (T\psi) (\xi) d\xi d\eta - \frac{b-x}{b-a}\int_a^b \int_a^\eta (T\psi) (\xi) d\xi d\eta,
\end{equation}{}
and hence, the operator $L(\psi) := u_\psi'$ can be expressed as
\begin{equation}\label{Lpsi}
    L(\psi)(x) = -\int_a^x (T\psi)(\xi)d\xi  + \frac{1}{b-a}\int_a^b \int_a^\eta (T\psi)(\xi) d\xi d\eta. 
\end{equation}{}
From \eqref{Lpsi} one can see that the operator $L$ is also linear and bounded in $\L$, and thus, from the convergence theories developed for Landweber iterations or steepest descent methods, see \cite{Engl+Hanke+Neubauer, Kaltenbacher_Neubauer_Scherzer}, the sequences $\{ \psi_m \}$ and $\{ g_m\}$ also converge to $\varphi$ and $g$, respectively, in $\L$.
\end{proof}{}

Similarly, one can extend the convergence results for functional $G$ as
\begin{theorem}\label{convg2 Gen.}
Suppose that $\{ \psi_m \}$ is any sequence  of $\mathcal{L}^2$-functions such that the sequence $\{ G(\psi_m) \}$ tends to zero. Then $\{ \psi_m \}$, and the corresponding sequence $\{ g_m := T\psi_m \}$, converge in $\L$ to $\varphi$ and $g$, respectively, and $\{ u_{\psi_m} \}$ converges to $u$ in $\HH$.
\end{theorem}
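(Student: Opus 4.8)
The plan is to follow the proof of Theorem \ref{convg1 Gen.} almost verbatim, upgrading $G_2$ to $G$ and replacing the $\mathcal{H}^1$-estimate \eqref{G2bounds} by the sharper $\mathcal{H}^2$-estimate \eqref{Gbounds Gen.}. First I would record the decomposition $G = G_1 + G_2$ with $G_1(\psi) = \Lnorm{T\psi - g}^2 \geq 0$ and $G_2(\psi) = \Lnorm{u' - u_\psi'}^2 \geq 0$, so that $G(\psi_m) \to 0$ forces both $G_1(\psi_m) \to 0$ and $G_2(\psi_m) \to 0$. The first of these is exactly $\Lnorm{T\psi_m - g}^2 \to 0$, which immediately yields the strong convergence $g_m = T\psi_m \to g$ in $\L$.

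Next, the $\mathcal{H}^2$-convergence of $u_{\psi_m}$ comes for free from \eqref{Gbounds Gen.}: since $\left(1 + \frac{1}{\lambda_1}\right)^{-1}\HHnorm{u - u_{\psi_m}}^2 \leq G(\psi_m) \to 0$, we obtain $u_{\psi_m} \to u$ strongly in $\HH$, which is the last assertion of the theorem. This is strictly stronger than the $\H$-convergence obtained for $G_2$ alone, reflecting Remark \ref{Re2}: the presence of the data-fitting term $G_1$ in $G$ controls one additional derivative of $u - u_{\psi_m}$, since $(u - u_{\psi_m})'' = T\psi_m - g$.

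For the weak convergence $\psi_m \rightharpoonup \varphi$ in $\L$ I would reproduce the argument from Theorem \ref{convg1 Gen.}: using that the iterates are bounded (as for the descent sequence there) and testing against any $\psi = T^*w \in R(T^*)$, the adjoint relation gives $\Lprod{\psi_m - \varphi}{T^*w} = \Lprod{T\psi_m - g}{w} \to 0$, since $T\psi_m \to g$ strongly. Because $T$ is injective we have $\overline{R(T^*)} = N(T)^\perp = \L$, so this weak convergence on a dense subspace extends to all of $\L$, giving $\psi_m \rightharpoonup \varphi$.

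The main obstacle, exactly as in Theorem \ref{convg1 Gen.}, is upgrading weak to strong convergence of $\psi_m$ in $\L$: because $T$ is ill-posed, $T\psi_m \to g$ does not by itself control $\Lnorm{\psi_m - \varphi}$. To overcome this I would encode $G$ as a least-squares functional for a single augmented operator. Using \eqref{Lpsi}, set $L\psi := u_\psi'$ and $\mathcal{T}\psi := (T\psi,\, L\psi)$, so that $G(\psi) = \Lnorm{T\psi - g}^2 + \Lnorm{L\psi - u'}^2 = \|\mathcal{T}\psi - (g, u')\|^2$ in $\L \times \L$. Since $T$ and $L$ are both linear and bounded and $T$ is injective, $\mathcal{T}$ is linear, bounded and injective, and the descent sequence $\{\psi_m\}$ is then precisely a steepest-descent/Landweber sequence for $\mathcal{T}$. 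Hence the strong $\L$-convergence $\psi_m \to \varphi$ (and with it $g_m \to g$) follows from the standard convergence theory for such iterations cited in the proof of Theorem \ref{convg1 Gen.}. I expect this operator-theoretic step to be the only non-routine part; everything else is a direct transcription with $\mathcal{H}^1$ replaced by $\mathcal{H}^2$.
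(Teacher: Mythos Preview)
Your proposal is correct and follows essentially the same route the paper indicates: the paper does not spell out a separate proof for Theorem~\ref{convg2 Gen.} but simply states that ``similarly, one can extend the convergence results for functional $G$'', i.e., rerun the proof of Theorem~\ref{convg1 Gen.} with the $\mathcal{H}^2$-bound \eqref{Gbounds Gen.} in place of \eqref{G2bounds} and with the augmented operator $\mathcal{T}\psi=(T\psi,L\psi)$ playing the role of $L$ in the final appeal to Landweber/steepest-descent convergence theory. Your decomposition $G=G_1+G_2$ to read off $g_m\to g$ directly, and your weak-convergence argument via $\Lprod{\psi_m-\varphi}{T^*w}=\Lprod{T\psi_m-g}{w}\to 0$, are minor streamlinings of the paper's version but not a different approach.
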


\subsection{Regularization (through appropriate early stopping)}
As explained earlier, when solving an ill-posed problem with noisy data, one can not recover a regularized solution by simply minimizing the associated functional ($F$ in \eqref{LS solution} or $G_1$ in \eqref{G1 Gen.}) completely. Here, the recovery error follows a semi-convergent nature, i.e., in the initial iterations the sequence $\{ \psi_m^\delta \}$ converges towards the true solution ($\varphi$) but upon further iterations $\psi_m^\delta$ diverges away from $\varphi$, due to the noise and ill-posedness of the problem. Equivalently, during the initial stage $||\psi_m^\delta - \varphi|| \rightarrow 0$ for $m \leq m_0(\delta) < \infty$ (for some appropriate $m_0(\delta)$), but upon further iterations $||\psi_m^\delta - \varphi|| \rightarrow \infty$ (or $||\psi_m^\delta|| \rightarrow \infty$), as $m \rightarrow \infty$. Hence such methods are also called as \textit{semi-iterative methods} and terminating the minimization process at an appropriate early iteration leads to a regularized solution, i.e., the iteration index ($m$) serves as a regularization parameter, for details see \cite{Engl+Hanke+Neubauer, Kaltenbacher_Neubauer_Scherzer}.

Here we show that the semi-convergent nature is also shown by the functional $G_2$ when recovering the true (or original) solution $\varphi$ but using a noisy data $g_\delta$. First we see that for an exact $g$ (or equivalently, an exact $u$) and the functional ($G$) constructed based on it, we have the true solution satisfying $G(\varphi) = 0$. However, for a given noisy $g_\delta$, with $ \Lnorm{g_\delta - g} = \delta > 0$, and the functional ($G_\delta$) based on it we will have $G_\delta (\varphi) > 0$, see Theorem \ref{well-posedness Gen.}. So if we construct a sequence of functions $\{\psi_m^\delta \} \subset \L$, using the descent algorithm and based on the noisy data $g_\delta$, such that $G_\delta(\psi_m^\delta) \rightarrow 0$ then (from Theorem \ref{convg1 Gen.} or \ref{convg2 Gen.}) we will have $\psi_m^\delta \rightarrow \varphi_\delta$, where $\varphi_\delta$ is the recovered noisy solution satisfying $G_\delta(\varphi_\delta) = 0$. This implies initially $\psi_m^\delta \rightarrow \varphi$ and then upon further iterations $\psi_m^\delta$ diverges away from $\varphi$ and approaches $\varphi_\delta$, such that $\Lnorm{\varphi - \varphi_\delta} >> \delta$. This typical behavior of any ill-posed problem is managed, as stated above, by stopping the descent process at an appropriate iteration $M(\delta)$ such that $G_\delta(\psi_{M(\delta)}^\delta) > 0$ but close to zero, this gives us a regularized solution $\varphi^\delta := \psi_{M(\delta)}^\delta$ such that $\Lnorm{\varphi^\delta - \varphi} \leq C(\delta)$.

Following a similar argument as in \eqref{Gbounds Gen.} we have a lower bound for $G_\delta(\varphi)$ as
\begin{theorem}\label{well-posedness Gen.}
Let $G$ and $G_\delta$ be the functionals defined corresponding to the given data $g$ and $g_\delta$ (or $u, u_\delta$) and, $\varphi, \varphi_\delta$ denote their respective minimizers (i.e., $G(\varphi)=0$ and $G_\delta(\varphi_\delta)=0$), then we have the following lower bound for $G_\delta(\varphi)$ 
\begin{align}\label{Glowerbounds Gen.}
    G_\delta(\varphi) \geq \left( 1 + \frac{1}{\lambda_1} \right)^{-1} \HHnorm{u - u_\delta}^2,
\end{align}
where $\lambda_1 > 0$ is the smallest eigenvalue of the positive operator $-\Delta$ on $\HHO$, and vice-versa for $G(\varphi_\delta)$.
\end{theorem}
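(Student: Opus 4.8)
The plan is to observe that the two-sided bound \eqref{Gbounds Gen.}, although written for the exact functional $G$, is entirely data-agnostic: the identical computation gives, for the noisy functional and every $\psi \in \L$,
\[
    \left( 1 + \frac{1}{\lambda_1} \right)^{-1} \HHnorm{u_\delta - u_\psi}^2 \leq G_\delta(\psi) \leq \HHnorm{u_\delta - u_\psi}^2 .
\]
Once this is in hand, the theorem follows simply by evaluating the lower bound at $\psi = \varphi$.

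First I would record the governing identity. Since $u_\delta = -\Delta^{-1} g_\delta$ and $u_\psi = -\Delta^{-1}(T\psi)$ both solve the Dirichlet problem \eqref{operator L}, we have $-u_\delta'' = g_\delta$ and $-u_\psi'' = T\psi$, so that $T\psi - g_\delta = (u_\delta - u_\psi)''$ and, crucially, $u_\delta - u_\psi \in \HHO$. Hence the first term of $G_\delta$ equals $\Lnorm{(u_\delta - u_\psi)''}^2$, and
\[
    G_\delta(\psi) = \Lnorm{(u_\delta - u_\psi)''}^2 + \Lnorm{(u_\delta - u_\psi)'}^2 = \HHnorm{u_\delta - u_\psi}^2 - \Lnorm{u_\delta - u_\psi}^2 .
\]

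Next I would invoke the positivity of $-\Delta$ on $\HHO$ in its Poincar\'e form $\lambda_1 \Lnorm{v}^2 \leq \Lnorm{v'}^2$, legitimately applicable to $v = u_\delta - u_\psi$ because $v \in \HHO$. Writing $Q = \Lnorm{(u_\delta - u_\psi)'}^2$ and $R = \Lnorm{(u_\delta - u_\psi)''}^2$, the preceding identity reads $G_\delta(\psi) = Q + R$, whereas $\HHnorm{u_\delta - u_\psi}^2 = \Lnorm{u_\delta - u_\psi}^2 + Q + R$. The Poincar\'e inequality gives $\Lnorm{u_\delta - u_\psi}^2 \leq \lambda_1^{-1} Q \leq \lambda_1^{-1}(Q + R)$, whence $\HHnorm{u_\delta - u_\psi}^2 \leq (1 + \lambda_1^{-1})(Q + R) = (1 + \lambda_1^{-1}) G_\delta(\psi)$; rearranging yields the lower bound in the displayed two-sided estimate.

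Finally I would specialize to $\psi = \varphi$. Because $G(\varphi) = 0$, its first term forces $T\varphi = g$, so $u_\varphi = -\Delta^{-1}(T\varphi) = -\Delta^{-1} g = u$; substituting this into the lower bound produces exactly $G_\delta(\varphi) \geq \left( 1 + \frac{1}{\lambda_1} \right)^{-1} \HHnorm{u - u_\delta}^2$, which is \eqref{Glowerbounds Gen.}. The ``vice-versa'' assertion for $G(\varphi_\delta)$ follows verbatim after interchanging the roles of $(g, u)$ and $(g_\delta, u_\delta)$, now using $u_{\varphi_\delta} = u_\delta$. I do not anticipate a genuine obstacle: the estimate is essentially \eqref{Gbounds Gen.} transported to the noisy functional and read off at the exact minimizer. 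The only point demanding care is confirming that $u - u_\delta \in \HHO$, so that the eigenvalue (Poincar\'e) inequality is available; this is guaranteed by the homogeneous Dirichlet boundary conditions built into $-\Delta^{-1}$ in \eqref{operator L}.
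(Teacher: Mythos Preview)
Your proposal is correct and follows exactly the approach indicated in the paper, which simply states that the theorem follows ``a similar argument as in \eqref{Gbounds Gen.}.'' You have made this explicit by reproducing the two-sided estimate for the noisy functional $G_\delta$ via the identity $G_\delta(\psi) = \HHnorm{u_\delta - u_\psi}^2 - \Lnorm{u_\delta - u_\psi}^2$ and the Poincar\'e inequality on $\HHO$, and then specialized to $\psi = \varphi$ (using $u_\varphi = u$), which is precisely what the paper intends.
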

Therefore, combining \eqref{Gbounds Gen.} and Theorem \ref{well-posedness Gen.} we have the following two-sided bound for $G_\delta(\varphi)$, for some constants $C_1$ and $C_2$,
\begin{equation}
C_1 \HHnorm{u - u_\delta}^2 \leq G_\delta(\varphi) \leq C_2 \HHnorm{u - u_\delta}^2.
\end{equation}
Thus, when $\delta \rightarrow 0$ we have $G_\delta(\varphi) \rightarrow 0$ which implies $\varphi_\delta \rightarrow \varphi$ in $\L$.

Though from \eqref{Glowerbounds Gen.} we would like to stop the descent process when
\begin{equation}\label{stopping criteria 0 Gen.}
G_\delta(\psi_m^\delta) < \left( 1 + \frac{1}{\lambda_1} \right)^{-1} \HHnorm{u - u_\delta}^2,
\end{equation}  
but as we do not know the exact $g$ (equivalently, the exact $u$) we can not use \eqref{stopping criteria 0 Gen.} directly as the stopping criteria for the descent process.

\subsection{{Stopping criterion I}}\label{stopping criteria I}
When the error norm $\delta = \Lnorm{g - g_\delta}$ is known then \textit{Morozov's discrepancy principle}, \cite{Morozov_a}, can serve as a stopping criterion for the descent process, i.e., terminate the iteration when 
\begin{equation}\label{morozov principle Gen.}
\Lnorm{T\psi_m^\delta - g_\delta} \leq \tau \delta
\end{equation}
for an appropriate $\tau > 1$\footnote{In our experiments, we used the termination condition:  $\Lnorm{T\psi_m^\delta - g_\delta} < \delta$.}. For the convergence and stability of the process see \cite{Hanke_Neubauer_Scherzer}. 

\section{\textbf{Numerical Results}}\label{Numerical Results}
We follow the following (pseudo-) Algorithm \ref{Algorithm} and perform inverse recoveries on some standard integral type equations. A MATLAB code was written to test the numerical viability of the method and the results obtained are then compared with certain standard regularization methods, 
see Table \ref{Table Numerical Derivative} and Table \ref{Imaging error comparison}. 

\begin{algorithm}[ht]
 \KwData{Given noisy $g_\delta$, one construct $u_\delta$ as defined in \ref{uH20}}
 \KwResult{ Variational recovery of a regularized solution ($\varphi^\delta$) for \eqref{T Gen.}}
 \textbf{Initialization:} Start with an initial guess ($\psi_0^\delta \equiv 0$)\\
 \While{$\Lnorm{T\psi^\delta - g_\delta} > \Lnorm{g - g_\delta}$}{
 \textbf{Choose a gradient ($\nabla G$):}\\
  \begin{itemize}
      \item $\lgrad{}G = -2T^*( (g_\delta - T\psi^\delta) + (u_\delta - u_{\psi^\delta}))$
      \item $\hgrad{}G = (I - \Delta)^{-1}\lgrad{}G$, with Neumann B.C., if $\lgrad{}G|_{\{a \; | \; b\}} = 0$
      \item Conjugate gradients ($h_m$), as defined in \S \eqref{conjg. grad. sec.}, for faster descent rate
  \end{itemize}
  \textbf{Define:} $\psi^\delta_{\gamma}$ := $@(\gamma)$ $\psi^\delta$ - $\gamma \nabla G$ and 
  $f_\gamma$ := $@(\gamma) \; G(\psi_\gamma^\delta)$\\
  Find the optimal $\gamma_0$, as defined in (\ref{alpha0})\\
  \eIf{$G(\psi_{\gamma_0}^\delta) \geq G(\psi^\delta)$}{
   minimize $f_\gamma$ in $[0, \gamma_0]$, using a 1D-minimizer (Brent minimization)\\
   $\tilde{\gamma} = \argmin_{[0,\gamma_0]} \; f_\gamma$\;
   }{
   $\gamma_1 = \gamma_0$ and $\gamma_2 = \gamma_1 + \gamma_0$\\
   \While{$f_{\gamma_2} < f_{\gamma_1}$}{
   $\gamma_1 = \gamma_2$ and $\gamma_2 = \gamma_1 + \gamma_0$
   }
   minimize $f_\gamma$ in $[\gamma_1, \gamma_2]$\\
   $\tilde{\gamma} = \argmin_{[\gamma_1,\gamma_2]} \; f_\gamma$\;
  }
  $\psi^\delta = \psi^\delta - \tilde{\gamma} \; \nabla G$\;
 }
 \caption{The Descent Algorithm}\label{Algorithm}
\end{algorithm}

\subsection{Fredholm Integral Equation of the First Kind}\label{section fredhom integral}
A Fredholm integral equation of the first kind is an integral operator equation that depends on a kernel function $K(s,t)$ and is given by
\begin{equation}\label{fredholm equation}
T\varphi := \int_a^b K(s,t) \varphi(t) dt = g(s) \;, \mbox{ for } s \in [c, d].
\end{equation}
Such integral operator equations are classical inverse problems and can be quite ill-posed. 
The operator equation \eqref{fredholm equation} is discretized using either Galerkin or Nystr$\ddot{o}$m methods to yield the following linear discrete ill-posed problem 
\begin{equation}
    Ax = b,
\end{equation}
where the matrix $A \in \mathbb{R}^{m\times n}$ is the discretized representation of the operator $T$ and the vectors $x \in \mathbb{R}^n$, the $b \in \mathbb{R}^m$ are the discretized source and effect functions. MATLAB functions in \cite{Hansen_MATLAB2020, Hansen_IRtools, Hansen2007} determine the discretizations $A \in \mathbb{R}^{n\times n}$, the scaled discrete approximations of $x \in \mathbb{R}^n$ and $b := Ax \in \mathbb{R}^m$. To test the stability of the method, a Gaussian (zero-mean) error vector $\epsilon_\delta \in \mathbb{R}^m$ is added to $b$ to get a perturbed vector $b_\delta \in \mathbb{R}^m$, such that $\frac{||\epsilon_\delta||_2}{||b||_2}$ (the relative error norm) is around $10\%$, unless otherwise stated. In particular, when using discrepancy principle to terminate the descent process we assume $\delta = ||\epsilon_\delta||_2$ to be known.

\subsection{Numerical Differentiation}
In the first example we study 
the problem of numerical differentiation (of noisy data), which is also governed by an integral equation (also known as a Volterra equation) of the following form
\begin{equation}\label{Volterra eq.}
    \int_a^x \varphi(t) dt = g(t) - g(a), 
\end{equation}{}
for $x \in [a,b]$, and comparing \eqref{Volterra eq.} to \eqref{fredholm equation}, the kernel function can be considered as $K(s,t ) = \chi_{[a,x]}(t)$, where $\chi$ is the standard characteristic function, for details see \cite{Abinash1}. Now, for some given discrete noisy data values $\{g_\delta(x_i)) \}_{i=1}^n$, we construct a discrete set of values $\{ u_\delta(x_i) \}_{i=1}^n$ for the function $u_\delta$, as defined in \eqref{uH20}. Note that, when using the formula \eqref{uH20} to construct $u_\delta$ or $u_\psi$, the values of $u_\delta(x)$ and $u_\psi(x)$ depend on the choice of the value of the integration mesh-size $\Delta x$, because of the discretization of the integral, where for any integrable function $f$,
\begin{equation}
    \int_a^x f(t)dt = \Delta x\sum\limits_{i=1}^{m: x_m=x} f(x_i).
\end{equation}
Hence, in the following examples we show that increasing the $\Delta x$ values leads to smoother solutions as well as saturating the relative errors in the recovery. 
Since the kernel function $K(s,t)$ is a simple step-function, which is not smooth, one can notice (see Figure \ref{ND graph}) that the recovered solution obtained using the discrepancy principle, though regularized, is also not smooth. However, with increasing $\Delta x$ values the smoothness of the recovery increases, see Figure \ref{recovery Deltax}.
Also, note that here (as explained in \S \ref{L2-Gradient}) using $\mathcal{L}^2$ or $\mathcal{L}^2$-$\mathcal{L}^2$ conjugate gradient wouldn't be a good choice if $\varphi(b) \neq 0$, as $(\lgrad{}G)(b)=0$ (since $T^*(b)=0$), which leads to a sharp decay at the end point `b', see Figure \ref{phib n= 0} and Table \ref{Table Numerical Derivative}. In such scenarios, either Neumann $\mathcal{H}^1$ or $\mathcal{L}^2$-$\mathcal{H}^1$ conjugate gradient will be very effective.

\begin{example}\label{Example Numerical Derivative}
We contaminate two functions $g_1(x) =\frac{x^3}{3}-x$ and $g_2(x) = \frac{x^3}{3} - \frac{x}{2}$ on $[-1,1]$ with Uniform noise, to produce noisy $g_{1,\delta}$ and $g_{2,\delta}$ with a relative noise level of 10\%. We collected evenly spread 200 of those noisy data points, which corresponds to a grid-size of 0.01. Note that, since $g_2'(x) = x^2$ on $[-1,1]$, we have $\varphi_2(b) = g_2'(1) \neq 0$. Now we compute the numerical regularized-derivatives from these noisy $g_\delta$'s data points and compare it with the true derivatives $g_i'$'s. Figure \ref{ND graph} shows the comparison of the recoveries obtained using Tikhonov regularization and our method with the $\mathcal{L}^2$-$\mathcal{L}^2$ conjugate gradient and the $\mathcal{H}^1$-$\mathcal{L}^2$ conjugate gradient, with the relative errors in Table \ref{Table Numerical Derivative}. Note that, when computing $g_{2,\delta}'$ (where $g_{2}'(b)\neq 0$) the use of Neumann $\mathcal{L}^2$-$\mathcal{H}^1$ conjugate gradient leads to a much better and smoother recovery, because of the flexibility at the boundary point $b$ and $\hgrad{}G = (1 - \Delta)^{-1}\lgrad{}G$ (leading to a smoother gradient).

In the above computations we used the integration mesh-size $\Delta x = 0.01$ for constructing $u_\delta$ and $u_\psi$. Then we repeat the experiment with the same number of noisy data points but with different $\Delta x$ values for the integration. Note that, here $\Delta x$ represents the mesh-size for the integral to get $u_\delta$ or $u_\psi$ values and is not connected to the original grid-size of the problem (which determines the number of data points). Figure \ref{recovery Deltax} shows the increase in the smoothness level with the increase in the $\Delta x$ values and Figure \ref{relative errors Deltax} shows the saturating effect shown by increasing $\Delta x$ values. In fact, increasing $\Delta x$ values not only saturates the recovery errors during the descent process but also lowers the minimum error of the recovery (though we cannot obtain that solution directly from the discrepancy principle).
\end{example}{}

\begin{figure}
    \begin{subfigure}{0.4\textwidth}
        \includegraphics[width=\textwidth]{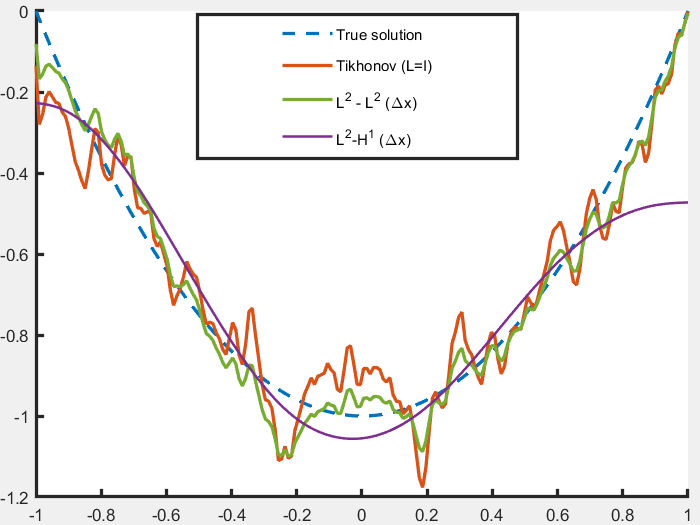}
        \caption{Computed $g_{1,\delta}'$ with $\varphi(b)=0$}
        \label{phib=0}
    \end{subfigure}
    \begin{subfigure}{0.4\textwidth}
        \includegraphics[width=\textwidth]{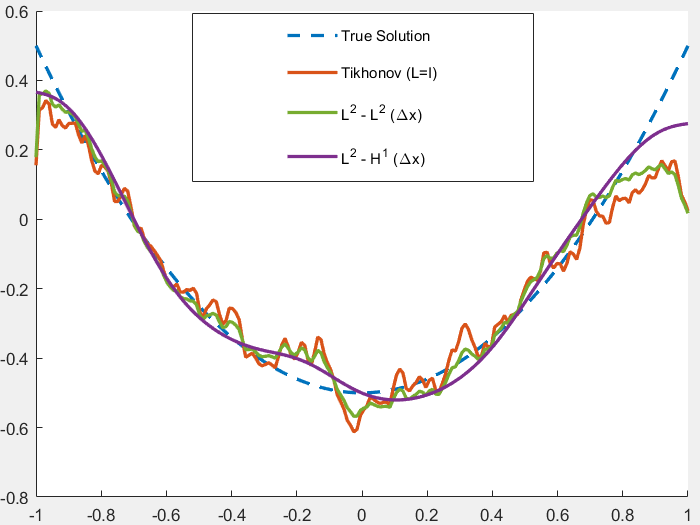}
        \caption{Computed $g_{2,\delta}'$ with $\varphi(b)\neq 0$}
        \label{phib n= 0}
    \end{subfigure}
    \caption{ (a) $g_1'(x) = x^2 - 1$, (b) $g_2'(x)= x^2 - 0.5$; Computed: Tikhonov (green), Our method: $\nabla_{\mathcal{L}^2-\mathcal{L}^2}$ (red), $\nabla_{\mathcal{L}^2-\mathcal{H}^1}$ (magenta). Observe that, using $\nabla_{\mathcal{L}^2-\mathcal{H}^1}$ leads to much smoother solutions.} 
    \label{ND graph}
\end{figure}

\begin{figure}
    \begin{subfigure}{0.4\textwidth}
        \includegraphics[width=\textwidth]{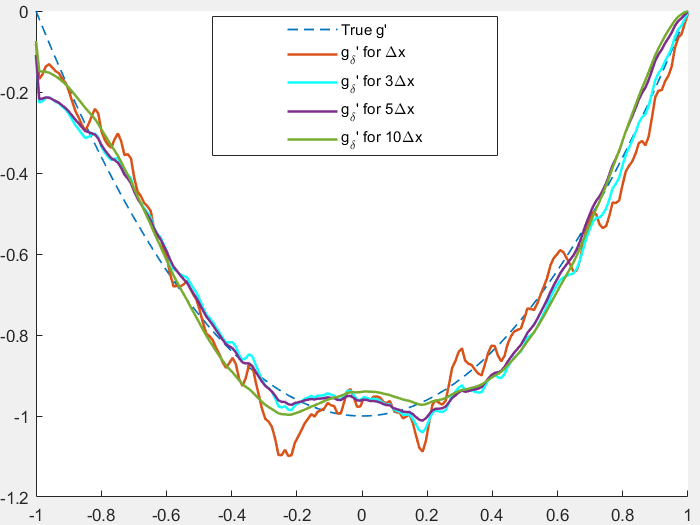}
        \caption{Computed $g_{1,\delta}'$ for various $\Delta x$'s.}
        \label{recovery Deltax}
    \end{subfigure}
    \begin{subfigure}{0.4\textwidth}
        \includegraphics[width=\textwidth]{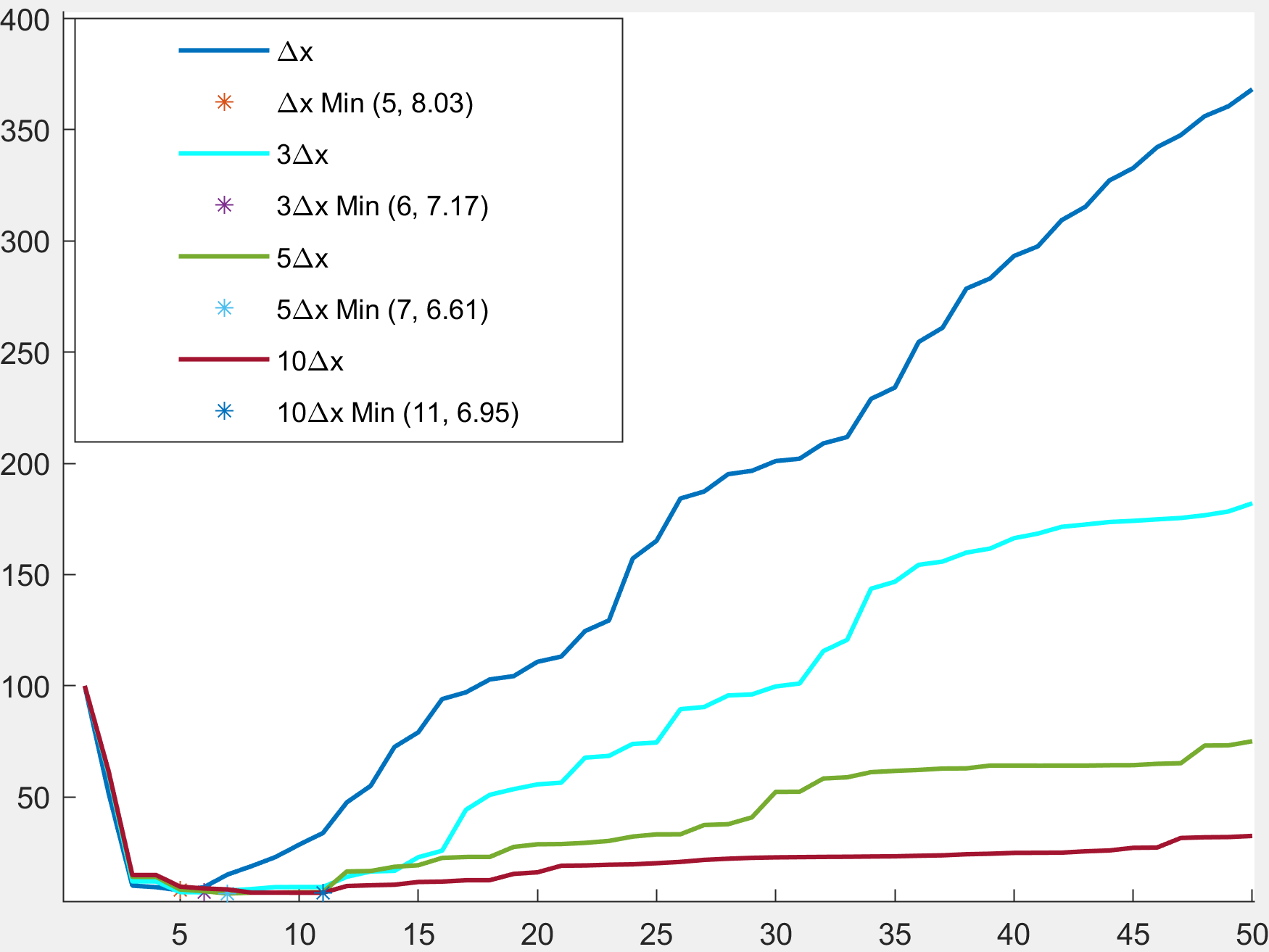}
        \caption{Relative errors for various $\Delta x$'s.}
        \label{relative errors Deltax}
    \end{subfigure}
    \caption{ (a) Recoveries for different $\Delta x$ values, (b) Recovery errors during the descent process with different $\Delta x$'s for Example \ref{Example Numerical Derivative}. Note that, the smoothness of the recoveries increases with increasing $\Delta x$ values and it also saturates the recovery errors.}  
    \label{Different Deltax}
\end{figure}

\begin{table}[ht]
    \centering
    \begin{tabular}{|p{2.1cm}||p{1.3cm}|p{1cm}|p{1cm}||p{1.3cm}|p{1cm}|p{1cm}|p{1cm}|}
    \hline
    \multicolumn{7}{|c|}{Stopping strategy: Morozov's discrepancy principle, see \eqref{morozov principle Gen.}}\\
    \hline
        & $\varphi(b)=0$ & &  & $\varphi(b) \neq 0$  &  &\\
    \hline
    Methods & $\Delta x$ & $3\Delta x$ & $5\Delta x$ & $\Delta x$ & $3\Delta x$ & $5 \Delta x$ \\
    \hline
    Tik ($L=I$)  & 0.1179 & &  & 0.2748 & & \\
    \hline
    Tik ($L=|\nabla|$) & 0.1822 &  &  & 0.2205 &   &  \\
    \hline
    TSVD & 0.0695 &  &  & 0.2734 &   &  \\
    \hline
    TTLS & 0.4544 &  &  & 0.3521 &   &  \\
    \hline
    CGLS & 0.0814 & &  &  0.2543 &  &  \\
    \hline
    LSQR$_B$ & 0.0814 & &  &  0.2543 &  &  \\
    \hline
    $\nu$-method & 0.1734 & &  &  0.2651 &  &  \\    
    \hline
    \hline
    {\color{red} G ($\nabla_{\mathcal{L}^2 - \mathcal{L}^2}$})& {\color{red} 0.0803} & {\color{red} 0.0720} & {\color{red} 0.0661} & {\color{red} 0.2574} & {\color{red} 0.2691} & {\color{red} 0.2921} \\
    \hline
    {\color{red} G ($\nabla_{\mathcal{L}^2-\mathcal{H}^1}$}) & {\color{red} 0.1608} & {\color{red} 0.1907} & {\color{red} 0.1861} & {\color{red} 0.1474} & {\color{red} 0.2237} & {\color{red} 0.2218} \\
    \hline    
    \end{tabular}
    \caption{Relative errors for Example \ref{Example Numerical Derivative}.}
    \label{Table Numerical Derivative}
\end{table}


\subsection{Imaging Problems}
In this subsection we deal with higher dimensional inverse problems. Considering an image as a matrix leads to a 2-dimensional problem. However, we can transform a two dimensional $(n\times n)$-matrix to an one dimensional vector of length $n^2$ by simply stacking either the columns or rows over each other and hence, converting the 2-dimensional problem to a 1-dimensional problem. Now, since the integral limits can be arbitrary we assign the limits based on grid or mesh size, i.e., assuming $a=0$ and some value for $\Delta x$ we have $b=n^2\Delta x$. 
Here, since the transformed 1D vector corresponding to an image is highly discontinuous and non-smooth, having a large $\Delta x$ value leads to over-smooth the solution. Hence, we start with the initial choice $\Delta x = 10^{-4}$ and show that with conservative increments one can attain a similar saturation of the relative errors as shown in Figure \ref{relative errors Deltax}. We used $\lgrad{}G$ as the descent direction during the recovery process and the dependence of the relative errors on the integration mesh-size ($\Delta x$) is projected in Figure \ref{imaging problems pics}.

\begin{example}\label{Example Image Deblurring}\textbf{Image deblurring}\\
In this example we apply our regularization technique to deblurr a noisy blurred image. The mechanism behind blurring an image also results in a Fredholm integral equation of first kind \eqref{fredholm equation}, with kernel being a Gaussian spread function, i.e., $K(s,t) = \frac{1}{\sqrt{2\pi \sigma^2}} e^{\frac{(s-t)^2}{2\sigma^2}}$, where $\sigma$ determines the spread and the ill-posedness of the problem (ill-posedness increases with the increase in the $\sigma$ value, here we considered $\sigma=3$). A true test image ($\varphi$), see Figure \ref{True_image_deblurr}, is first blurred and then contaminated with Gaussian noise ($\approx$ 10\% relative error) to get a blurred noisy image ($g_\delta$), see Figure \ref{blurred_noisy_image}. Again, the discretized matrix $A \in \mathbb{R}^{n\times n}$ and the vectors $x$, $b$ in $\mathbb{R}^n$ (where $n=16384=128\times 128$) are generated using the routine, $blur()$, from \cite{Hansen_MATLAB2020,Hansen2007}. In this problem we performed a constraint minimization\footnote{i.e., we project the negative values to zero.}, $\psi \geq 0$, during the recovery process, since the pixel values are non-negative, and used the $\lgrad{}G$ gradient as the descent direction. Figure \ref{recovered_deblurred_image} shows the recovered image (using functional $G$) and Table \ref{Imaging error comparison} shows an error comparison between our recovery and the recoveries obtained by using some of the standard iterative regularization methods such as Least Squares minimization with (i) $\mathcal{L}^1$-norm (IRell1), (ii) (heuristic) total variation penalization (IRhtv), (iii) Conjugate Gradient (CGLS), (iv) FISTA for constrained LS (IRfista), (v) modified residual norm steepest descent (IRmrnsd), (vi) enriched CGLS (IRenrich), (vii) $\mathcal{L}^1$-norm penalization term (IRirn), (viii) hybrid FGMRES for enforcing $\mathcal{L}^1$-norm penalization (IRhybrid\_fgmres), (ix) hybrid GMRES for square systems (IRhybrid\_gmres), (x) hybrid LSQR (IRhybrid\_lsqr), (xi) restarted Krylov subspace method (IRrestart), where the routines for the above mentioned methods are taken from \cite{Hansen_IRtools}, with the non-negative constraint (i.e., \textit{`nonnegativity',`on')} and the stopping rule as the discrepancy principle (\textit{`stoprule',`DP'}).

\begin{figure}[ht]
    \centering
    \begin{subfigure}{0.4\textwidth}
        \includegraphics[width=\textwidth]{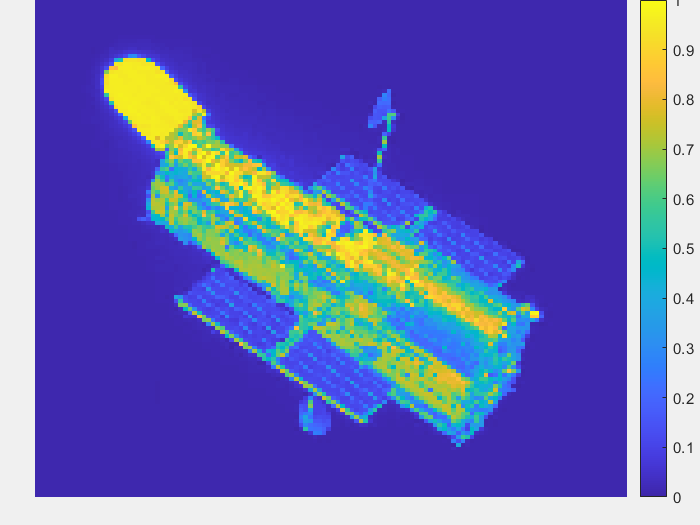}
        \caption{True Image, Example \ref{Example Image Deblurring}.}
        \label{True_image_deblurr}
    \end{subfigure}
    \begin{subfigure}{0.4\textwidth}
        \includegraphics[width=\textwidth]{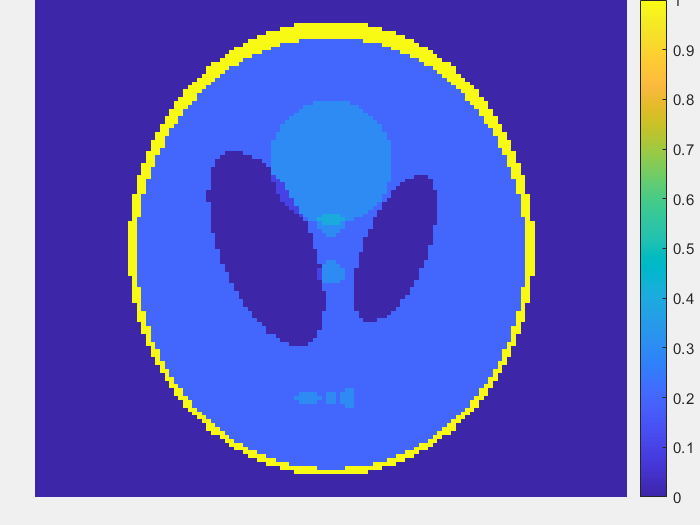}
        \caption{True image, Example \ref{Example Tomography}.}
        \label{True_image_tomo}
    \end{subfigure}    
    \begin{subfigure}{0.4\textwidth}
        \includegraphics[width=\textwidth]{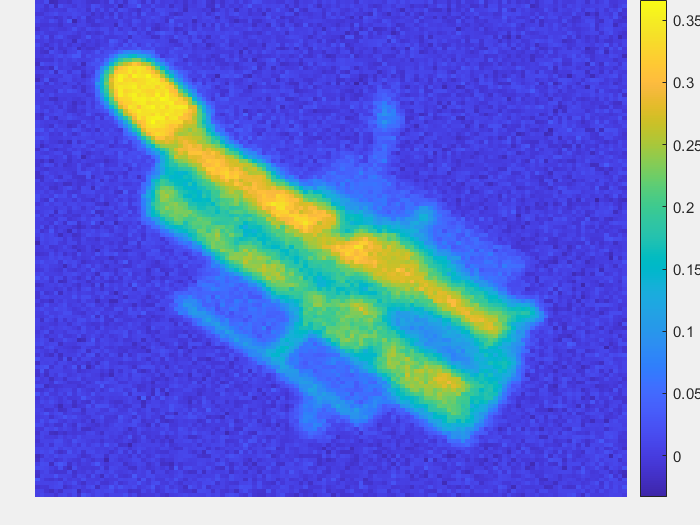}
        \caption{Blurred noisy image}
        \label{blurred_noisy_image}
    \end{subfigure}
    \begin{subfigure}{0.4\textwidth}
        \includegraphics[width=\textwidth]{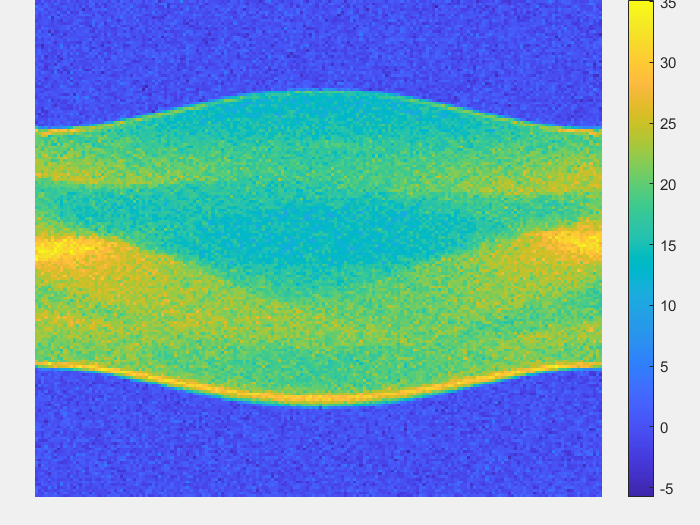}
        \caption{Noisy tomography phantom.}
        \label{Noisy_image_tomo}
    \end{subfigure}
    \begin{subfigure}{0.4\textwidth}
        \includegraphics[width=\textwidth]{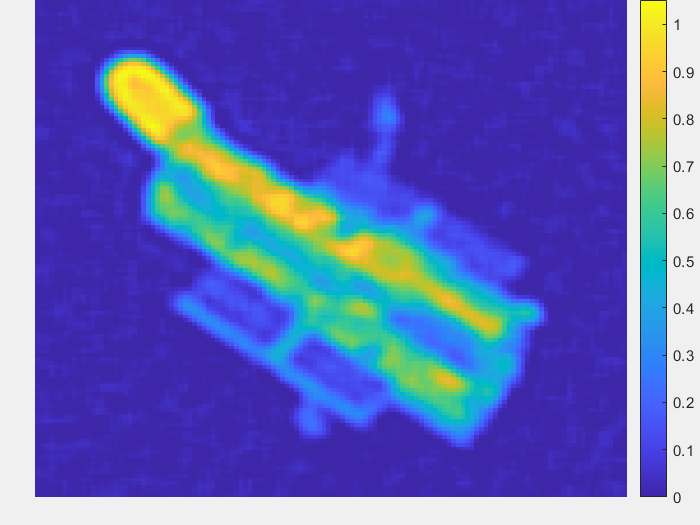}
        \caption{Deblurred image.}
        \label{recovered_deblurred_image}
    \end{subfigure}
    \begin{subfigure}{0.4\textwidth}
        \includegraphics[width=\textwidth]{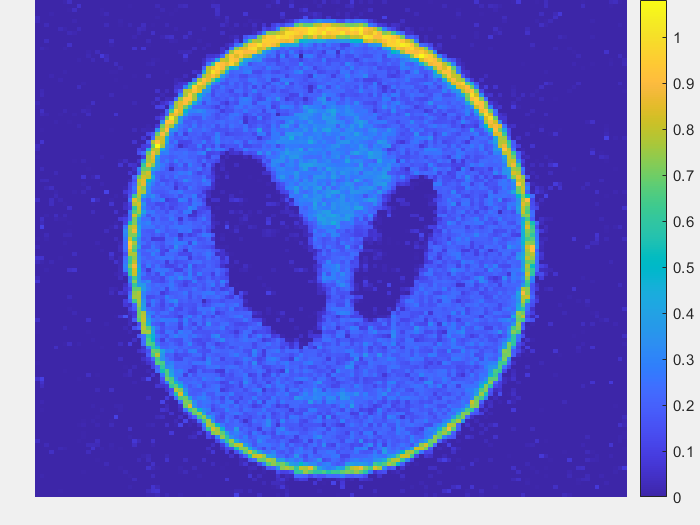}
        \caption{Tomography recovery.}
        \label{recovered_image_tomo}
    \end{subfigure}
    \begin{subfigure}{0.4\textwidth}
        \includegraphics[width=\textwidth]{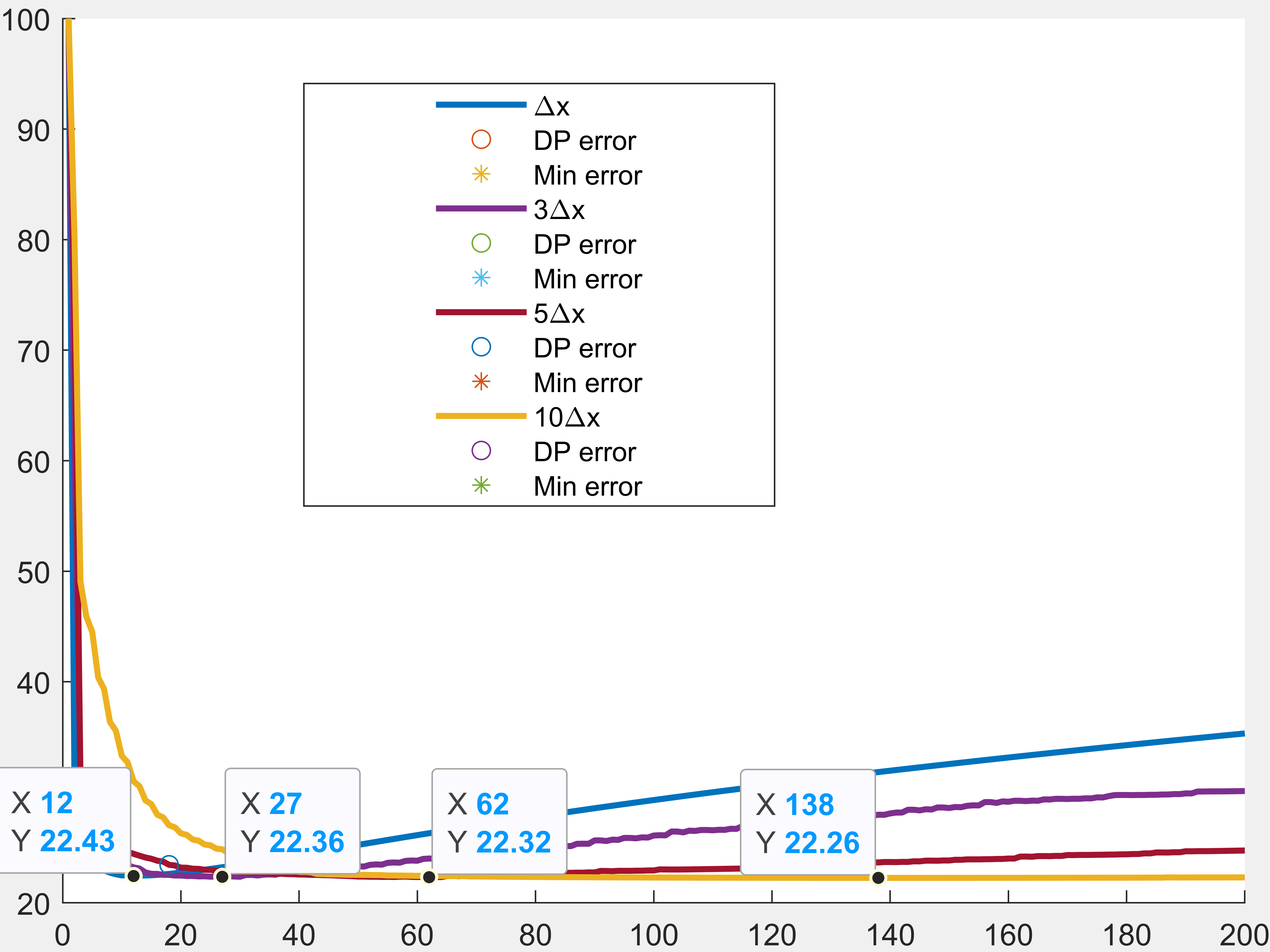}
        \caption{Relative errors descent.}
        \label{Hubble error descent}
    \end{subfigure}
    \begin{subfigure}{0.4\textwidth}
        \includegraphics[width=\textwidth]{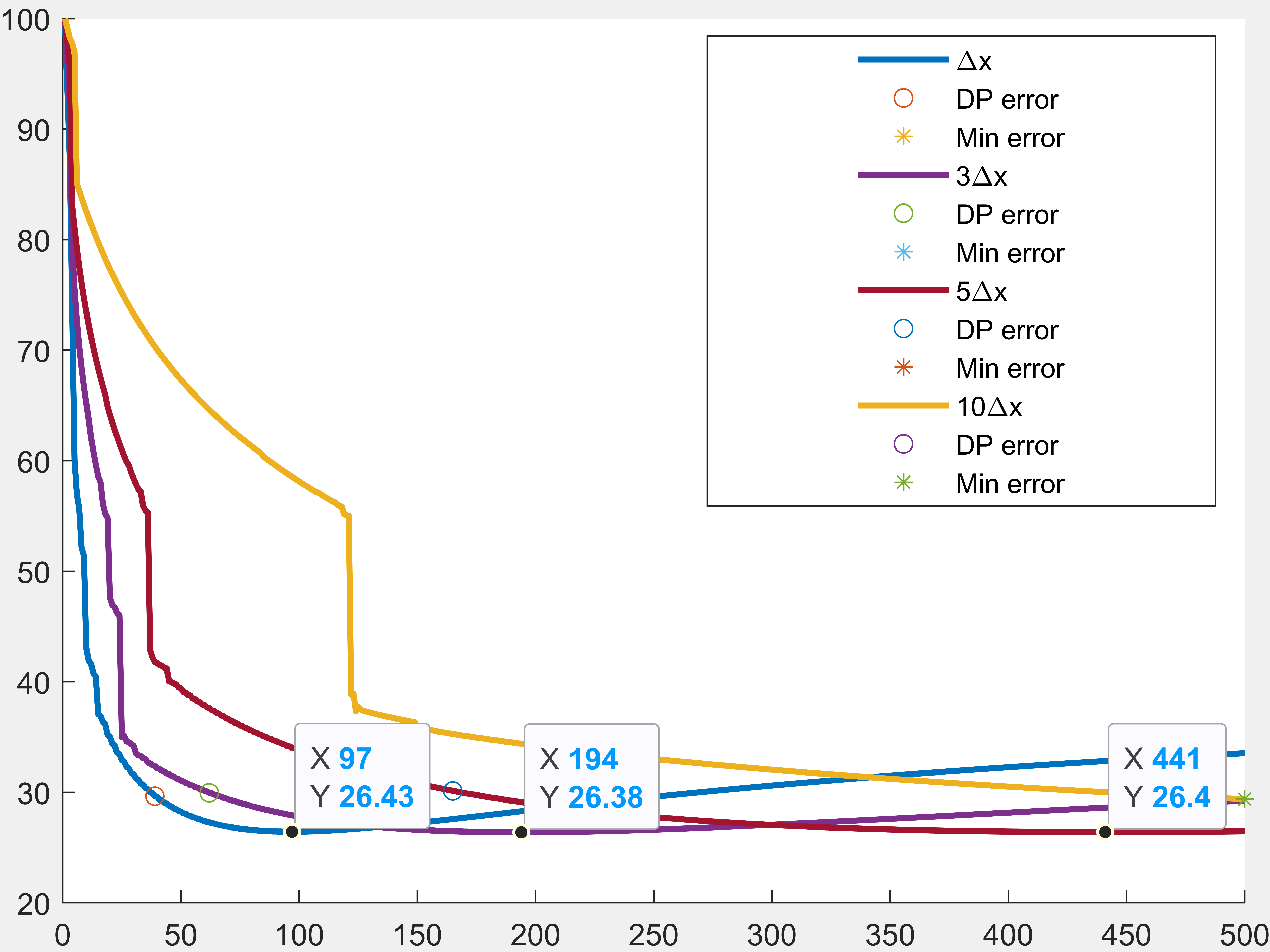}
        \caption{Relative errors descent.}
        \label{Tomography error descent}
    \end{subfigure}    
    \caption{Imaging problems: Example \ref{Example Image Deblurring} and \ref{Example Tomography}. Note that, increasing $\Delta x$ values has a saturating effect on the relative errors.} 
    \label{imaging problems pics}
\end{figure}

\end{example}{}

\begin{example}\label{Example Tomography}\textbf{[Tomography test problem]}\\
In this example we considered the tomography test problem from \cite{Hansen2007}, where the matrix $A \in \mathbb{R}^{n \times n}$ and the vectors $x$, $b \in \mathbb{R}^n$ (where $n=16384$) are generated using the routine, $PRtomo()$, defined in \cite{Hansen_IRtools}. The true image is shown in Figure \ref{True_image_tomo} and the noisy image ($\approx$ 10\% relative noise) for vector $b = Ax$ is shown in Figure \ref{Noisy_image_tomo}. Here, we again performed a constrained minimization ($\psi \geq 0$) during the descent process, as in Example \ref{Example Image Deblurring}. The recovered image is shown in Figure \ref{recovered_image_tomo} and Table \ref{Imaging error comparison} shows an error comparison between our method and some of the standard iterative regularization methods mentioned in Example \ref{Example Image Deblurring}. 
\end{example}{}

\begin{table}[ht]
    \centering
    \begin{tabular}{|p{2.2cm}||p{2.1cm}|p{2.3cm}||p{2.1cm}|p{2.3cm}|}
    \hline
    \multicolumn{5}{|c|}{Relative errors in the recoveries using different regularization methods.}\\
    \hline
    & {Deblurring} & & Tomography &\\
    \hline
    Methods & Relative error & Sparsity (\# 0s) & Relative error & Sparsity (\# 0s)\\
    \hline
    IRell1 & 0.3126 & 0 &  &  \\
    \hline
    IRhtv & 0.3361 & 5732 & 0.5404 & 225 \\
    \hline
    IRcgls & 0.2353 & 0 & 0.3436 & 0 \\
    \hline
    IRfista & 0.2349 & 4058 & 0.3012 & 7505 \\
    \hline
    IRmrnsd & 0.2257 & 1 & 0.2462 & 1 \\
    \hline
    IRenrich & 0.2352 & 0 & 0.3424 & 0 \\
    \hline
    IRirn & 0.3199 & 2035 &  &  \\
    \hline
    IRhy.\_fgmres & 0.3126 & 0 &  &  \\
    \hline
    IRhy.\_gmres & 0.2444 & 0 &  &  \\
    \hline
    IRhy.\_lsqr & 0.2793 & 0 & 1.025 & 0 \\
    \hline
    \hline
    {\color{red} Our ($\nabla_{\mathcal{L}^2-\mathcal{L}^2}$)} & {\color{red} 0.2357} & {\color{red}3797} & {\color{red} 0.2964} & {\color{red} 7563} \\
    \hline
    \end{tabular}
    \caption{Imaging inverse problems: Example \ref{Example Image Deblurring} and Example \ref{Example Tomography}.}
    \label{Imaging error comparison}
\end{table}

\begin{remark}
Again, one can notice (from Figure \ref{Hubble error descent}) that increasing the $\Delta x$ values saturate the relative errors of the recoveries during the descent process. However, if $\Delta x$ value is increased significantly then it will also decrease the rate of the descent process, as can be seen in Figure \ref{Tomography error descent}, especially when dealing with highly discontinuous functions (such as images).
\end{remark}

\section{\textbf{Conclusion and future research}}
In this paper we present a new approach for iterative regularization where instead of using the original noisy gradient we opted a smoother gradient, which is more robust to the noise present in the original data ($g_\delta$). The numerical results show that the method attains lower minimal relative errors during the recovery process, as well as, saturates the semi-convergent nature of the recovery relative errors. One can also notice that when dealing with high noise level data, where the operator $T^*$ fails to smooth the noisy effect of $g_\delta$, having smoother gradients is much more effective than the usual gradient direction. 

In follow up papers we are trying to generalize this regularization method to solve non-linear inverse problems and also incorporate sparsity in the regularization process.

\section*{Acknowledgment}
I am very grateful to Prof. Ian Knowles for his support, encouragement and stimulating discussions throughout the preparation of this paper.

\bibliography{thesisref} 

\begin{thebibliography}{10}

\bibitem{Engl+Hanke+Neubauer}
H.~W. Engl, M.~Hanke, and A.~Neubauer, {\em Regularization of inverse
  problems}, vol.~375 of {\em Mathematics and its Applications}.
\newblock Kluwer Academic Publishers Group, Dordrecht, 1996.

\bibitem{Bakushinsky+Goncharsky}
A.~Bakushinsky and A.~Goncharsky, {\em Ill-posed problems: theory and
  applications}, vol.~301 of {\em Mathematics and its Applications}.
\newblock Kluwer Academic Publishers Group, Dordrecht, 1994.
\newblock Translated from the Russian by I. V. Kochikov.

\bibitem{Groetsch}
C.~W. Groetsch, {\em The theory of {T}ikhonov regularization for {F}redholm
  equations of the first kind}, vol.~105 of {\em Research Notes in
  Mathematics}.
\newblock Pitman (Advanced Publishing Program), Boston, MA, 1984.

\bibitem{Baumeister}
J.~Baumeister, {\em Stable solution of inverse problems}.
\newblock Advanced Lectures in Mathematics, Friedr. Vieweg \& Sohn,
  Braunschweig, 1987.

\bibitem{Morozov_a}
V.~A. Morozov, {\em Methods for solving incorrectly posed problems}.
\newblock Springer-Verlag, New York, 1984.
\newblock Translated from the Russian by A. B. Aries, Translation edited by Z.
  Nashed.

\bibitem{Landweber}
L.~Landweber, ``An iteration formula for fredholm integral equations of the
  first kind,'' {\em American Journal of Mathematics}, vol.~73, no.~3,
  pp.~615--624, 1951.

\bibitem{Hanke_Neubauer_Scherzer}
M.~Hanke, A.~Neubauer, and O.~Scherzer, ``A convergence analysis of the
  landweber iteration for nonlinear ill-posed problems,'' {\em Numerische
  Mathematik}, vol.~72, pp.~21--37, Nov 1995.

\bibitem{Hanke1991}
M.~Hanke, ``Accelerated landweber iterations for the solution of ill-posed
  equations,'' {\em Numerische Mathematik}, vol.~60, pp.~341--373, Dec 1991.

\bibitem{Schock1986}
E.~Schock, ``Semi-iterative methods for the approximate solution of ill-posed
  problems,'' {\em Numerische Mathematik}, vol.~50, pp.~263--271, May 1986.

\bibitem{Bakushinskii1979}
A.~B. Baku\v{s}inski\u{\i}, ``On the principle of iterative regularization,''
  {\em Zh. Vychisl. Mat. i Mat. Fiz.}, vol.~19, no.~4, pp.~1040--1043, 1084,
  1979.

\bibitem{Bakushinsky+Kokurin2004}
A.~B. Bakushinsky and M.~Y. Kokurin, {\em Iterative methods for approximate
  solution of inverse problems}, vol.~577 of {\em Mathematics and Its
  Applications (New York)}.
\newblock Springer, Dordrecht, 2004.

\bibitem{Hanke_2017}
M.~Hanke, {\em Conjugate gradient type methods for ill-posed problems}.
\newblock 01 2017.

\bibitem{Abinash1}
A.~Nayak, ``A new regularization approach for numerical differentiation,'' {\em
  Inverse Problems in Science and Engineering}, vol.~0, no.~0, pp.~1--26, 2020.

\bibitem{Knowles2004}
I.~Knowles, ``Variational methods for ill-posed problems,'' in {\em Variational
  methods: open problems, recent progress, and numerical algorithms}, vol.~357
  of {\em Contemp. Math.}, pp.~187--199, Amer. Math. Soc., Providence, RI,
  2004.

\bibitem{Kaltenbacher_Neubauer_Scherzer}
B.~Kaltenbacher, A.~Neubauer, and O.~Scherzer, {\em Iterative regularization
  methods for nonlinear ill-posed problems}, vol.~6 of {\em Radon Series on
  Computational and Applied Mathematics}.
\newblock Walter de Gruyter GmbH \& Co. KG, Berlin, 2008.

\bibitem{Hansen_MATLAB2020}
P.~C. Hansen, ``Regularization tools: A matlab package for analysis and
  solution of discrete ill-posed problems. version 4.1.,'' 2020.

\bibitem{Hansen_IRtools}
S.~Gazzola, P.~Hansen, and J.~Nagy, ``Ir tools - a matlab package of iterative
  regularization methods and large-scale test problems,'' {\em Numerical
  Algorithms}, 2018.

\bibitem{Hansen2007}
P.~C. Hansen, ``Regularization {T}ools version 4.0 for {M}atlab 7.3,'' {\em
  Numer. Algorithms}, vol.~46, no.~2, pp.~189--194, 2007.

\end{thebibliography}
\bibliographystyle{ieeetr}

\end{document}